\newtheorem{theorem}{Theorem}
\newtheorem{corollary}[theorem]{Corollary}
\newtheorem{definition}[theorem]{Definition}
\newtheorem{example}[theorem]{Example}
\newtheorem{lemma}[theorem]{Lemma}
\newtheorem{problem}[theorem]{Problem}
\newtheorem{proposition}[theorem]{Proposition}
\newtheorem{remark}[theorem]{Remark}
\def\qed{\vbox{\hrule
 \hbox{\vrule\hbox to 5pt{\vbox to 8pt{\vfil}\hfil}\vrule}\hrule}}
\journal{xxxxxxx}
\begin{document}
\begin{frontmatter}

\title{Realizable Lists on a Class of Nonnegative Matrices}

\author{Cristina Manzaneda}
\address{Departamento de Matem\'{a}ticas, Facultad de Ciencias. Universidad Cat\'{o}lica del Norte. Av. Angamos 0610 Antofagasta, Chile.}
\ead{cmanzaneda@ucn.cl}

\author[]{Enide Andrade\corref{cor1}}
\address{CIDMA-Center for Research and Development in Mathematics and Applications
         Departamento de Matem\'atica, Universidade de Aveiro, 3810-193, Aveiro, Portugal.}
\ead{enide@ua.pt}

\author{Mar\'{\i}a Robbiano}
\address{Departamento de Matem\'{a}ticas, Facultad de Ciencias. Universidad Cat\'{o}lica del Norte. Av. Angamos 0610 Antofagasta, Chile.}
\ead{mrobbiano@ucn.cl}

\begin{abstract}

A square matrix of order $n$ with $n\geq 2$ is called \textit{permutative matrix} when all its rows (up to the first one) are permutations of precisely its first row. In this paper recalling spectral results for partitioned into $2$-by-$2$ symmetric blocks matrices 
sufficient conditions on a given complex list to be the list of the eigenvalues of a nonnegative permutative matrix are given. In particular, we study NIEP and PNIEP when some complex elements into the considered lists have no zero imaginary part. Realizability regions for nonnegative permutative matrices are obtained.
\end{abstract}

\begin{keyword}

permutative matrix;
inverse eigenvalue problem; nonnegative matrix; circulant matrix; skew circulant matrix.

\MSC 15A18, 15A29, 15B99.

\end{keyword}

\end{frontmatter}

\section{Overview of Some Results}

The Nonnegative Inverse Eigenvalue Problem (called NIEP), consists on finding necessary and sufficient conditions on a list on $n$ complex numbers
\begin{equation}
\sigma =\left\{\lambda_1, \lambda_2,\ldots,\lambda_n\right\}
\label{list}
\end{equation}
to be the spectrum of an  $n$-by-$n$ entry-wise nonnegative matrix.
If there exists an $n$-by-$n$ nonnegative matrix $A$ with spectrum $\sigma$ (sometimes denoted by $\sigma(A)$), we will say that $\sigma$ is realizable and $A$ realizes $\sigma$ (or, that is a realizing matrix for the list).
An analogous problem is called the nonnegative symmetric eigenvalue problem (SNIEP) that consists on finding necessary and sufficient conditions on a list of $n$ complex numbers $\sigma$ to be the spectrum of a $n$-by-$n$ symmetric nonnegative matrix. In this case, if there exist such matrix $A$ then it is said that $\sigma$ is symmetrically realizable and $A$ is a symmetric realization of $\sigma$. The NIEP has a long history since its proposal by Kolmogorov \cite{kolmogorov} but was first formulated by Sule\u{\i}manova \cite{SLMNva} in $1949.$

\begin{definition}
{\rm The list $\sigma$ in (\ref{list}) is a Sule\u{\i}manova spectrum if the $
\lambda ^{\prime }s$ are real numbers, $\lambda _{1}>0\geq \lambda _{2}\geq
\cdots \geq \lambda _{n}$ and $\sum_{i=1}^{n}\lambda_i \geq 0$.}
\end{definition}

This problem attracted the attention of many authors over $50+$ years.  Although some partial results were obtained the NIEP is an open problem for $n \geq 5$. The NIEP for $n \leq 3$ was solved independently by Oliveira \cite[Theorem (6.2)]{ Oliveira} and Loewy and London in \cite{LwyLdn}. For matrices of order $n=4$ the problem was solved  in \cite{Meehan} and \cite{Mayo}. It has been studied in its general form in e.g. \cite{Boyle,Johnson,Laffey2,muitos,LwyLdn,SmgcH,NN3,GuoWen}. The SNIEP is also an attractive problem for many researchers, see for instance \cite{Fiedler,Laffey,LMc,Soules}.
When the lists are formed by $n$ real numbers and how they can occur as eigenvalues of a $n$-by-$n$  nonnegative matrix is also another interesting problem and it is called real nonnegative inverse eigenvalue problem (RNIEP). Some results concerning RNIEP can be seen in e.g. \cite{Boro,Friedland,HzlP, RS, SRM}. A recent survey on NIEP can be seen in \cite{JhonsonMarijuan}.

Concerning the NIEP there are some immediate necessary conditions on a list of complex numbers $\sigma =\left\{ \lambda _{1},\lambda _{2},\ldots ,\lambda _{n}\right\}$ to be the spectrum of a nonnegative matrix.

\begin{enumerate}

\item The list $\sigma $ is closed under complex conjugation.

\item The Perron eigenvalue $\rho=\max \left\{ \left\vert \lambda \right\vert
:\lambda \in \sigma \left( A\right) \right\} $ lies in $\sigma.$

\item $s_{k}\left( \sigma \right) =\sum\limits_{i=1}^{n}\lambda
_{i}^{k}\geq 0.$

\item $s_{k}^{m}\left( \sigma \right) \leq n^{m-1}s_{km}\left( \sigma
\right)$ for $ k,m=1,2,\ldots $
\end{enumerate}

Theory and applications of nonnegative matrices are blended in the book \cite{BergmanPlemmons}. Practical problems with applications to Markov chains, queuing networks, economic analysis, or mathematical programming and also inverse eigenvalue problems are presented there. Extensive references are included in each area. See also the book \cite{Mink} for a general theory on nonnegative matrices.

One of the most promising attempts to solve the NIEP is using constructive methods. For instance, given a class of nonnegative matrices find a large class of spectra realized by these matrices. Companion matrices of polynomials played an important role.
Let
\begin{eqnarray*}
f(x) &=& (x-\lambda_1)(x- \lambda_2) \cdots (x- \lambda_{n})\\
     &=& x^n + a_{1} x^{n-1}+ a_{2} x^{n-2}+ \cdots + a_{n}.
\end{eqnarray*}
The companion matrix of $f(x)$ is:
\begin{equation*}
C(f)= \left[
\begin{array}{ccccc}
0        &      1   & 0      & \cdots & 0 \\
0        &      0   & 1      & \ddots & \vdots \\
\vdots   &   \vdots & \ddots & \ddots & 0 \\
0        &     0    & \cdots &   0    & 1 \\
-a_{n}   & -a_{n-1} & \cdots & -a_2   & -a_1
\end{array}
\right].
\end{equation*}
Note that $A$ is nonnegative if and only if $a_i \leq 0,$ for $ i= 1, \ldots,n.$

Friedland \cite{Friedland} stated that Sule\u{\i}manova-type spectra are realizable by a companion matrix. Lowevy and London, in 1978 \cite{LwyLdn} showed that the list $(r, a+bi, a-bi),$ with $r \geq 1$, is realizable if and only if it is realizable by the matrix $\alpha I+C$, with $\alpha \geq 0$ and $C$ a nonnegative companion matrix. In \cite{LaffSmgc2}, T. Laffey and H. \v{S}migoc stated that a list $\sigma =\{\lambda_1, \ldots, \lambda_n\},$ with $\lambda_1 >0$
and $Re(\lambda_i) \leq 0, \ i =2, \ldots,n,$ is realizable if and only if it is realizable by a matrix of the form $\alpha I+C,$ where $\alpha \geq 0$ and $C$ is a companion matrix with trace zero. Rojo and Soto in \cite{RojoSoto} found sufficient conditions for the realizability of spectra by nonnegative circulant matrices. In \cite{KiHang} the authors studied realizability by nonnegative integral matrices.
Leal-Duarte and Johnson in \cite{LealJonhson} solved the NIEP for the case the realizing matrix is an arbitrary nonnegative diagonal matrix added to a nonnegative matrix whose graph is a tree. G. Soules in \cite{Soules}, construct a symmetric matrix $N$ having specified list of eigenvalues and give conditions on the maximal eigenvalue such that $N$ is nonnegative.
A problem proposed by T. Laffey and H. \v{S}migoc is to find good classes of matrices to study realizability and  many authors tried to find good classes of matrices to study this type of problems. Here we will focus on permutative matrices.
The summary of the paper is the following: At Section 1 we review some state of the art related with NIEP. At Section 2 we recall some recent results related with permutative matrices and at Section 3 we present some known spectral results for some structured matrices. The new results appear at Section 4 where are studied  NIEP and PNIEP (we call the problem as PNIEP when the NIEP involves permutative matrices),  when some complex elements into the considered lists have no zero imaginary part.  It is presented sufficient conditions on a complex list of four elements to be realized by a permutative matrix. The class of circulant  and and skew circulant matrices and their properties play an important role at Section 5 and some results are derived.

\section{A brief History on Permutative Matrices and Recent Results}
In this section we recall some useful tools from recent published results that will be used throughout the text. Some auxiliary results from \cite{PP}, and some recent definitions from the literature are recalled here. The next definition was firstly presented in \cite{PP} and it is the definition of permutative matrix. In fact, this term was given by C. R. Johnson (see footnote at \cite{PP}).

\begin{definition}
\label{ept} \cite{MAR}
Let $\mathbf{\tau }=\left(\tau _{0}, \tau _{1},\ldots ,\tau _{n-1}\right) $ be an $n$-tuple whose components are permutations in the symmetric group\ $\mathcal{S}_{n}$, with $\tau _{0}=id$.\ Let $\mathbf{a=}\left( a_{1},\ldots ,a_{n}\right) \in
\mathbb{C}^{n}$. Define the row-vector,
\begin{equation*}
\tau _{j}\left( \mathbf{a}\right) =\left( a_{\tau _{j}\left( 1\right)
},\ldots ,a_{\tau _{j}\left( n\right) }\right)
\end{equation*}%
and consider the matrix
\begin{equation}
\tau \left( \mathbf{a}\right) =%
\begin{pmatrix}
\tau _{1}\left( \mathbf{a}\right)   \\
\tau _{2}\left( \mathbf{a}\right)  \\
\vdots \\
\tau _{n-1}\left( \mathbf{a}\right)  \\
\tau _{n}\left( \mathbf{a}\right)
\end{pmatrix}
.  \label{permut}
\end{equation}
An $n$-by-$n$ matrix $A$ is called \textit{permutative} if
$A=\tau \left( \mathbf{a}\right) $ for some $n$-tuple $\mathbf{a}$.
\end{definition}




Ranks of permutative matrices were studied by Hu et al \cite{Hu}. The authors focus on identifying circumstances under which square permutative matrices are rank deficient.

In \cite{PP}, dealing with RNIEP, P. Paparella cosidered cases of realizable spectra when a realizing matrix can be taken to have a specific form, that is, to be a permutative matrix. Paparella raised the question when any realizable (real)  list can be realized by such a matrix or a direct sum of permutative matrices. The author showed that for $n \leq 4$ this is always possible. Moreover, it is shown in \cite{PP} that if the list $\sigma$ contains one positive number and is realizable then it can be realized by a permutative matrix, and thus explicit permutative matrices which realize Sule\u {\i}manova spectra were found. The author used a construtive proof. Loewy  in \cite{Lwy} showed than in general the answer to the question posed by P. Paparella is no.
Loewy \cite{Lwy} resolved this problem in the negative by showing that the list $ \sigma= \left( 1,\frac{8}{25}+\frac{\sqrt{51}}{50},\frac{8}{25}+\frac{\sqrt{51}}{50},-\frac{4}{5}, -\frac{21}{25}\right) $ is realizable but cannot be realized by a permutative matrix or by a direct sum of permutative matrices.

Recently, in \cite{MAR} the spectra of a class of permutative matrices were studied. In particular, spectral results for matrices partitioned into $2$-by-$2$ symmetric blocks were presented and, using these results sufficient conditions on a given list to be the list of eigenvalues of a nonnegative permutative matrix were obtained and the corresponding permutative matrices were constructed. We recall here some useful definitions from \cite{MAR} that will be used in the sequel.

\begin{definition}\cite{MAR}
If $A$ and $B$ are permutative by a common vector
$\mathbf{\tau }=\left( \tau _{0},\ldots ,\tau _{n-1}\right)$
then they are called \textit{permutatively equivalent}.
\end{definition}

\begin{definition}\cite{MAR}
Let $\varphi \in \mathcal{S}_{n}$ and the $n$-tuple $\mathbf{\tau }=\left(
id,\varphi ,\varphi ^{1},\ldots ,\varphi ^{n-1}\right) \in \left(
\mathcal{S}_{n}\right) ^{n}.$ Then a $\mathbf{\tau }$-permutative matrix is called $
\varphi $\textit{-permutative}.
\end{definition}

It is clear from the definitions that two
$\varphi $-permutative matrices are permutatively equivalent matrices.

\begin{remark}
If permutations are regarded as bijective maps from the set
\begin{equation*}\left\{ 0,1,\cdots, n-1\right\}\end{equation*} on to itself, then a circulant (respectively, left circulant) matrix
is a $\varphi $-permutative matrix
where
$\varphi \left( i \right)\equiv i-1(\mbox{mod}\ {n})$ (resp. $\varphi \left( i \right)\equiv i+1(\mbox{mod}\ {n})$).
\end{remark}


Throughout the paper and if no misunderstanding arise, the symbol $i$ stands for the complex square root of $-1$ and the row sub-index of the $(i,j)$-entry of some considered matrix. Note that the lists considered along the paper are equivalent (up to a permutation of its elements).

\section{Known Spectral Results on some Structured Matrices}

In this section we exhibit spectral results for matrices that are partitioned into $2$-by-$2$
symmetric blocks. The following results were proven in \cite{MAR}. The next theorem is valid in an algebraic closed field $K$ of characteristic $0$. For instance, $K=\mathbb{C}$.

\begin{theorem}
\label{main}\cite{MAR}
Let $K$ be an algebraically closed field of characteristic $0$ and suppose
that $A=\left( A_{ij}\right) $ is a block matrix of order $2n$, where
\begin{equation}
A_{ij}=%
\begin{pmatrix}
a_{ij} & b_{ij} \\
b_{ij} & a_{ij}
\end{pmatrix}
\text{,\ }a_{ij}\text{,\ }b_{ij}\in K.
\end{equation}
If
\[
s_{ij}=a_{ij}+b_{ij},\ 1\leq i,j\leq n
\]%
and%
\[
c_{ij}=a_{ij}-b_{ij},\ 1\leq i,j\leq n
\]%
then
\[
\sigma \left( A\right) =\sigma \left( S\right) \cup \sigma \left( C\right)
\]%
where
\[
S=\left( s_{ij}\right) \text{ and }C=\left( c_{ij}\right) .
\]
\end{theorem}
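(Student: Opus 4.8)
The plan is to exploit the fact that \emph{every} block
$A_{ij}=\begin{pmatrix} a_{ij} & b_{ij}\\ b_{ij} & a_{ij}\end{pmatrix}$
is diagonalized by one and the same matrix, independently of $i$ and $j$, namely
$H=\begin{pmatrix}1&1\\1&-1\end{pmatrix}$. Writing $A_{ij}=a_{ij}I_2+b_{ij}J$ with
$J=\begin{pmatrix}0&1\\1&0\end{pmatrix}$, the vectors $(1,1)^{\mathsf T}$ and $(1,-1)^{\mathsf T}$
are common eigenvectors of all the blocks, and $H^{-1}A_{ij}H=\operatorname{diag}(a_{ij}+b_{ij},\,a_{ij}-b_{ij})=\operatorname{diag}(s_{ij},c_{ij})$.
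The inverse $H^{-1}=\tfrac12 H$ exists precisely because $\operatorname{char}K\neq 2$, which is exactly where the hypothesis on $K$ is used; algebraic closure is only needed to read the spectrum off as roots of the characteristic polynomial. Since all blocks share this eigenbasis, the whole of $A$ should be reducible by a single conjugation. To organize this cleanly I would record the block entries in the two $n\times n$ matrices $A^{(a)}=(a_{ij})$ and $A^{(b)}=(b_{ij})$, so that the block structure of $A$ reads
\[
A=A^{(a)}\otimes I_2+A^{(b)}\otimes J .
\]

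Next I would conjugate $A$ by $I_n\otimes H$. By the mixed-product property of the Kronecker product,
\[
(I_n\otimes H)^{-1}A\,(I_n\otimes H)=A^{(a)}\otimes I_2+A^{(b)}\otimes\bigl(H^{-1}JH\bigr)=A^{(a)}\otimes I_2+A^{(b)}\otimes\operatorname{diag}(1,-1).
\]
Writing $I_2=E_{11}+E_{22}$ and $\operatorname{diag}(1,-1)=E_{11}-E_{22}$ in terms of the diagonal matrix units $E_{kk}$, the right-hand side rearranges as
\[
(A^{(a)}+A^{(b)})\otimes E_{11}+(A^{(a)}-A^{(b)})\otimes E_{22}=S\otimes E_{11}+C\otimes E_{22}.
\]
Finally I would apply the perfect-shuffle (commutation) permutation $\Pi$, characterized by $\Pi(M\otimes N)\Pi^{-1}=N\otimes M$, to interchange the two tensor factors, which yields $E_{11}\otimes S+E_{22}\otimes C=S\oplus C$, a block-diagonal matrix with diagonal blocks $S$ and $C$. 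Since $A$ is thereby shown to be similar to $S\oplus C$, and similar matrices have the same characteristic polynomial, one concludes $\sigma(A)=\sigma(S)\cup\sigma(C)$ as multisets.

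The conceptual content is entirely in the simultaneous diagonalization of the common-eigenvector blocks; there is no deep obstacle. The main thing to get right is the bookkeeping: verifying the mixed-product identity for the particular index ordering that defines the $2$-by-$2$ blocks, and checking that $\Pi$ is a genuine permutation similarity so that the spectrum is preserved \emph{with multiplicities} (this is why the conclusion must be read as an equality of multisets rather than of plain sets). An alternative, Kronecker-free route would conjugate $A$ directly by the explicit $2n\times 2n$ matrix $I_n\otimes H$ and then permute rows and columns so as to collect the odd-indexed coordinates together and the even-indexed coordinates together, reaching the same block-diagonal form $S\oplus C$; this argument is more elementary but noticeably heavier on indices, and I would prefer the tensor formulation for transparency.
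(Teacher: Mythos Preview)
Your argument is correct. The Kronecker-product formulation $A=A^{(a)}\otimes I_2+A^{(b)}\otimes J$ together with conjugation by $I_n\otimes H$ and the shuffle permutation cleanly yields the similarity $A\sim S\oplus C$, and hence the equality of spectra as multisets. Your remarks on where the hypothesis $\operatorname{char}K\ne 2$ enters (invertibility of $H$) and on why algebraic closure is only needed to speak of eigenvalues are accurate.

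Note, however, that this paper does not supply a proof of the theorem: it is quoted from \cite{MAR} as a known result, so there is no ``paper's own proof'' here to compare against. For what it is worth, the argument in \cite{MAR} is essentially the Kronecker-free variant you sketch at the end: one conjugates by the explicit $2n\times 2n$ change of basis built from $(1,1)^{T}$ and $(1,-1)^{T}$ and then reorders coordinates to obtain the block-diagonal form $S\oplus C$. Your tensor-product packaging is the same idea with tidier bookkeeping; neither approach contains anything the other lacks.
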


\begin{theorem}
\label{main2} \cite{MAR} Let $S=\left(  s_{ij}\right)  $ and $C=\left( c_{ij}\right)$
be matrices of order $n$ whose spectra (counted with their
multiplicities) are $\sigma (S)=\{\lambda_{1},\lambda_{2},\ldots
,\lambda_{n}\}  \ $and$\ \sigma (C)=\{\mu_{1},\mu_{2},\ldots
,\mu_{n}\}  $, respectively. Let $0\leq \gamma \leq 1$. If
\begin{equation}
\left\vert c_{ij}\right\vert \leq s_{ij}, 1\leq i,j\leq
n,\label{mayorize}
\end{equation}
(or equivalently if $S$, $S+C$ and $S-C$ are nonnegative matrices), then the matrices $\frac{1}{2}\left(  S+\gamma C\right)$ and $\frac{1}{2}\left(
S-\gamma C\right)$ are nonnegative and the nonnegative matrices
\begin{equation}
M_{\pm\gamma}=\left( M_{{ij}_{\pm\gamma}}\right)  ,\text{with \,} M_{{ij}_{\pm\gamma}}=
\begin{pmatrix}
\frac{s_{ij} \pm\gamma c_{ij}}{2} & \frac{s_{ij}\mp\gamma c_{ij}}{2}\\
\frac{s_{ij}\mp\gamma c_{ij}}{2} & \frac{s_{ij}\pm\gamma c_{ij}}{2}%
\end{pmatrix}
,\ \text{for}\ 1\leq i,j\leq n\label{byblock}%
\end{equation}
realize, respectively, the following lists
\[
\sigma(S)\cup\gamma \sigma(C):=\{ \lambda_{1},\lambda_{2},\ldots,\lambda_{n},\gamma\mu
_{1},\gamma \mu_{2},\ldots, \gamma \mu_{n}\}
\] and
\[
\sigma(S)\cup\left(-\gamma\sigma(C)\right):=\{ \lambda_{1},\lambda_{2},\ldots,\lambda_{n},-\gamma\mu
_{1},-\gamma\mu_{2},\ldots,-\gamma\mu_{n}\}.
\]

\end{theorem}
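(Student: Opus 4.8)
The plan is to reduce everything to Theorem~\ref{main}, preceded by a short entry-wise nonnegativity check. First I would dispatch the parenthetical equivalence: the inequality $|c_{ij}| \le s_{ij}$ says precisely that $s_{ij} \ge 0$, $s_{ij}+c_{ij}\ge 0$ and $s_{ij}-c_{ij}\ge 0$, i.e. that $S$, $S+C$ and $S-C$ are nonnegative, and the converse is read off the same inequalities. Then, because $0 \le \gamma \le 1$, one has $\gamma|c_{ij}| \le |c_{ij}| \le s_{ij}$, so $-s_{ij} \le \gamma c_{ij} \le s_{ij}$ and each entry $\tfrac{1}{2}(s_{ij}\pm\gamma c_{ij})$ is nonnegative; this shows both $\tfrac{1}{2}(S\pm\gamma C)$ and every entry of the blocks in (\ref{byblock}) are nonnegative, so the matrices $M_{\pm\gamma}$ are nonnegative.

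The heart of the argument is spectral and applies Theorem~\ref{main} verbatim. I would observe that $M_{+\gamma}$ is a $2n\times 2n$ matrix partitioned into $2\times 2$ symmetric blocks of exactly the form required there, with $a_{ij}=\tfrac{s_{ij}+\gamma c_{ij}}{2}$ and $b_{ij}=\tfrac{s_{ij}-\gamma c_{ij}}{2}$. The associated sum matrix has entries $a_{ij}+b_{ij}=s_{ij}$, hence equals $S$, and the associated difference matrix has entries $a_{ij}-b_{ij}=\gamma c_{ij}$, hence equals $\gamma C$. Theorem~\ref{main} then gives $\sigma(M_{+\gamma})=\sigma(S)\cup\sigma(\gamma C)$, and since scaling a matrix by $\gamma$ scales its eigenvalues by $\gamma$ we obtain $\sigma(\gamma C)=\{\gamma\mu_1,\ldots,\gamma\mu_n\}$, which is exactly the claimed list $\sigma(S)\cup\gamma\sigma(C)$.

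For $M_{-\gamma}$ the same reasoning applies with the diagonal and off-diagonal block entries interchanged: now $a_{ij}=\tfrac{s_{ij}-\gamma c_{ij}}{2}$ and $b_{ij}=\tfrac{s_{ij}+\gamma c_{ij}}{2}$, so the sum matrix is again $S$ while the difference matrix has entries $a_{ij}-b_{ij}=-\gamma c_{ij}$ and equals $-\gamma C$, yielding $\sigma(M_{-\gamma})=\sigma(S)\cup(-\gamma\sigma(C))$. I do not anticipate a genuine obstacle here: once $M_{\pm\gamma}$ is recognized as an instance of the block structure in Theorem~\ref{main}, the spectra drop out mechanically. The only points that demand care are keeping the signs straight when identifying $(a_{ij},b_{ij})$ so that the difference matrix correctly picks up $\pm\gamma C$, and invoking both hypotheses $\gamma\le 1$ and $|c_{ij}|\le s_{ij}$ jointly to secure the nonnegativity of the realizing matrices.
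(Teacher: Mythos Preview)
Your proof is correct and complete. Note that the present paper does not actually give a proof of Theorem~\ref{main2}: it is quoted from \cite{MAR} as a known result, so there is no in-paper argument to compare against. Your reduction to Theorem~\ref{main}---identifying $a_{ij}=\tfrac{1}{2}(s_{ij}\pm\gamma c_{ij})$, $b_{ij}=\tfrac{1}{2}(s_{ij}\mp\gamma c_{ij})$ so that the sum and difference matrices are $S$ and $\pm\gamma C$---is exactly the intended mechanism, and the nonnegativity check via $\gamma|c_{ij}|\le|c_{ij}|\le s_{ij}$ is the right one.
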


\begin{remark}
\label{important} \cite{MAR} Note that in the previous result if $S=\left( s_{ij}\right) $
and $C=\left( c_{ij}\right) $, then
\begin{equation}
M_{\pm\gamma}=
\begin{pmatrix}
\frac{s_{11} \pm \gamma c_{11}}{2} & \frac{s_{11} \mp\gamma c_{11}}{2} & \ldots & \ldots &
\frac{s_{1n} \pm \gamma c_{1n}}{2} & \frac{s_{1n} \mp\gamma c_{1n}}{2} \\
\frac{s_{11} \mp\gamma c_{11}}{2} & \frac{s_{11} \pm \gamma c_{11}}{2} & \ldots & \ldots &
\frac{s_{1n}\mp\gamma c_{1n}}{2} & \frac{s_{1n}\pm \gamma c_{1n}}{2} \\
\vdots & \vdots & \ddots & \ddots & \vdots & \vdots \\
\vdots & \vdots & \ddots & \ddots & \vdots & \vdots \\
\frac{s_{n1} \pm \gamma c_{n1}}{2} & \frac{s_{n1}\mp \gamma c_{n1}}{2} & \ldots & \ldots &
\frac{s_{nn} \pm \gamma c_{nn}}{2} & \frac{s_{nn} \mp\gamma c_{nn}}{2} \\
\frac{s_{n1} \mp\gamma c_{n1}}{2} & \frac{s_{n1} \pm \gamma c_{n1}}{2} & \ldots & \ldots &
\frac{s_{nn}\mp\gamma c_{nn}}{2} & \frac{s_{nn}\pm \gamma c_{nn}}{2}%
\end{pmatrix}%
.  \label{aspectM}
\end{equation}
\end{remark}

\begin{remark}
\label{cond}\cite{MAR} For two permutatively equivalent $n$-by-$n$ matrices $S=\left(
s_{ij}\right)  $ and $C=\left(  c_{ij}\right)  $ whose first row
are the $n$-tuples $\left(  s_{1},\ldots,s_{n}\right),  $ and
$\left(  c_{1},\ldots,c_{n}\right)  ,$ respectively, the inequalities $\left\vert
c_{ij}\right\vert \leq s_{ij}$ hold if and only if $\left\vert
c_{i}\right\vert \leq s_{i}, 1\leq i\leq n.$
\end{remark}
\begin{theorem}\label{main copy(1)}\cite{MAR}
Let $K$ be an algebraically closed field of characteristic $0$ and suppose
that $A=\left( A_{ij}\right) $ is an into block square matrix of order $2n+1$, where
\[
A_{ij}=\left\{
\begin{tabular}{cc}
$%
\begin{pmatrix}
a_{ij} & b_{ij} \\
b_{ij} & a_{ij}%
\end{pmatrix}%
$ & $1\leq i,j\leq n$ \\
$%
\begin{pmatrix}
a_{ij} \\
a_{ij}%
\end{pmatrix}%
$ & $1\leq i\leq n,\ j=n+1$ \\
$%
\begin{pmatrix}
a_{ij} & b_{ij}%
\end{pmatrix}%
$ & $i=n+1,$\ $1\leq j\leq n$ \\
$a_{ij}$ & $i=n+1,$\ $\ j=n+1.$%
\end{tabular}%
\right.
\]%
If
\[
s_{ij}=\left\{
\begin{tabular}{cc}
$a_{ij}+b_{ij}$ & $1\leq i,j\leq n$ \\
$a_{ij}$ & $1\leq i\leq n,\ j=n+1$ \\
$a_{ij}+b_{ij}$ & $i=n+1,$\ $1\leq j\leq n$ \\
$a_{ij}$ & $i=n+1,$\ $\ j=n+1$%
\end{tabular}%
\right.
\]%
and%
\[
c_{ij}=a_{ij}-b_{ij},\ 1\leq i,j\leq n.
\]%
Then
\[
\sigma \left( A\right) =\sigma \left( S\right) \cup \sigma \left( C\right),
\]%
where
\[
S=\left( s_{ij}\right) \text{ and }C=\left( c_{ij}\right) .
\]
\end{theorem}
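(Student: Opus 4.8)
The plan is to imitate the proof of Theorem~\ref{main} by simultaneously diagonalizing each symmetric $2\times 2$ block, while carefully tracking how the extra (odd) row and column indexed by $n+1$ interacts with the transformed coordinates. Concretely, I would conjugate $A$ by the invertible matrix
\[
Q=\underbrace{P\oplus P\oplus\cdots\oplus P}_{n}\oplus[1],\qquad
P=\begin{pmatrix}1&1\\ 1&-1\end{pmatrix},
\]
which acts as $P$ on each of the $n$ coordinate pairs $\{2j-1,2j\}$ carrying the $2\times 2$ blocks and fixes the last coordinate $2n+1$. Since $\det P=-2\neq 0$, the matrix $Q$ is invertible, and its columns provide a new basis consisting of the \emph{symmetric} vectors $g_j^{+}=e_{2j-1}+e_{2j}$, the \emph{antisymmetric} vectors $g_j^{-}=e_{2j-1}-e_{2j}$ (for $1\le j\le n$), and the single vector $g=e_{2n+1}$.

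The core computation is to evaluate $A$ on this basis, using the identity $P^{-1}\!\left(\begin{smallmatrix}a&b\\ b&a\end{smallmatrix}\right)P=\operatorname{diag}(a+b,\,a-b)$ blockwise. I expect to obtain
\begin{align*}
A\,g_j^{+}&=\sum_{i=1}^{n}s_{ij}\,g_i^{+}+s_{n+1,j}\,g,\\
A\,g&=\sum_{i=1}^{n}s_{i,n+1}\,g_i^{+}+s_{n+1,n+1}\,g,\\
A\,g_j^{-}&=\sum_{i=1}^{n}c_{ij}\,g_i^{-}+\bigl(a_{n+1,j}-b_{n+1,j}\bigr)\,g.
\end{align*}
The first two identities show that the subspace $W=\operatorname{span}\{g_1^{+},\ldots,g_n^{+},g\}$ is $A$-invariant, and that the restriction $A|_{W}$ is represented by $S$ in the basis $(g_1^{+},\ldots,g_n^{+},g)$. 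The third identity shows that the operator induced on the quotient $K^{2n+1}/W$ is represented by $C$ in the images of $(g_1^{-},\ldots,g_n^{-})$. Equivalently, reordering the new basis as $(g_1^{+},\ldots,g_n^{+},g,\,g_1^{-},\ldots,g_n^{-})$ turns $Q^{-1}AQ$ into the block upper triangular matrix $\left(\begin{smallmatrix}S&D\\ 0&C\end{smallmatrix}\right)$, whose only nonzero off-diagonal block $D$ has a single nonzero row (the $g$-row) with entries $a_{n+1,j}-b_{n+1,j}$. Since the spectrum of a block triangular matrix is the union of the spectra of its diagonal blocks and similarity preserves the spectrum, we conclude $\sigma(A)=\sigma(S)\cup\sigma(C)$.

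The main obstacle, and the essential difference from the even-order case of Theorem~\ref{main}, is exactly this coupling block $D$. In the even case the antisymmetric directions form a second invariant subspace and $Q^{-1}AQ$ is block \emph{diagonal} $S\oplus C$; here the presence of the odd coordinate destroys that second invariance, because each $g_j^{-}$ leaks into $g$ through the term $a_{n+1,j}-b_{n+1,j}$, which is precisely the ``missing'' entry $c_{n+1,j}$ that does not belong to the $n\times n$ matrix $C$. Thus no block-diagonal reduction is available and the argument must rest on block triangularity instead. The point that needs to be made fully rigorous is therefore the genuine $A$-invariance of $W$ (so that no symmetric or single direction ever acquires an antisymmetric component), which is exactly what the first two displayed identities guarantee; triangularity then forces the spectral identity even though complete decoupling fails.
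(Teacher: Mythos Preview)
Your argument is correct. The change of basis $Q=P^{\oplus n}\oplus[1]$ does exactly what you claim: the three displayed identities are easily verified blockwise, the subspace $W=\operatorname{span}\{g_1^{+},\ldots,g_n^{+},g\}$ is $A$-invariant with $A|_{W}$ represented by the $(n+1)\times(n+1)$ matrix $S$, and in the reordered basis $Q^{-1}AQ$ is block upper triangular $\left(\begin{smallmatrix}S&D\\0&C\end{smallmatrix}\right)$ with $D$ supported on the single $g$-row. Block triangularity then gives $\sigma(A)=\sigma(S)\cup\sigma(C)$.

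Regarding the comparison: the present paper does not actually prove this statement---it is quoted from \cite{MAR} as a known result (see the citation tag on the theorem), so there is no proof here to set yours against. Your approach is nonetheless the natural one, and your observation about the essential difference from the even case of Theorem~\ref{main} is exactly the point: in odd order the antisymmetric directions $g_j^{-}$ leak into $g$ via the entries $a_{n+1,j}-b_{n+1,j}$, so one obtains only a block \emph{triangular} form rather than the block \emph{diagonal} form $S\oplus C$ available when $n$ is even. That is precisely why $C$ is only $n\times n$ while $S$ is $(n+1)\times(n+1)$, and why the ``missing'' quantities $a_{n+1,j}-b_{n+1,j}$ land in the coupling block $D$ rather than in $C$.
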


\section{Complex lists}
In this section we study NIEP and PNIEP when some complex elements into the considered lists have no zero imaginary part.
The following result gives sufficient conditions on a complex list of four elements to be realized by a permutative matrix.

\begin{theorem} \label{first}
Let $\sigma =\left\{ \lambda _{1},\lambda _{2},\lambda _{3},\lambda
_{4}\right\}$ such that $\ \sum\limits_{i=1}^{4}\lambda_{i}\geq 0$, $\overline{\lambda }_{3}=\lambda _{4}$,  $\lambda _{1}+\lambda _{2}\geq 2 \rm{Re}\lambda _{3}$,
and
$\lambda _{1}-\lambda _{2}\geq 2\left\vert \rm{Im}\lambda _{3}\right\vert $,
then the nonegative permutative
matrix
\begin{equation}
M=
\begin{pmatrix}
\frac{\lambda _{1}+\lambda _{2}+\lambda _{3}+\lambda _{4}}{4} & \frac{%
\lambda _{1}+\lambda _{2}-\lambda _{3}-\lambda _{4}}{4} & \frac{\lambda
_{1}-\lambda _{2}-\lambda _{3}i+\lambda _{4}i}{4} & \frac{\lambda
_{1}-\lambda _{2}+\lambda _{3}i-\lambda _{4}i}{4} \\
\frac{\lambda _{1}+\lambda _{2}-\lambda _{3}-\lambda _{4}}{4} & \frac{%
\lambda _{1}+\lambda _{2}+\lambda _{3}+\lambda _{4}}{4} & \frac{\lambda
_{1}-\lambda _{2}+\lambda _{3}i-\lambda _{4}i}{4} & \frac{\lambda
_{1}-\lambda _{2}-\lambda _{3}i+\lambda _{4}i}{4} \\
\frac{\lambda _{1}-\lambda _{2}+\lambda _{3}i-\lambda _{4}i}{4} & \frac{%
\lambda _{1}-\lambda _{2}-\lambda _{3}i+\lambda _{4}i}{4} & \frac{\lambda
_{1}+\lambda _{2}+\lambda _{3}+\lambda _{4}}{4} & \frac{\lambda _{1}+\lambda
_{2}-\lambda _{3}-\lambda _{4}}{4} \\
\frac{\lambda _{1}-\lambda _{2}-\lambda _{3}i+\lambda _{4}i}{4} & \frac{%
\lambda _{1}-\lambda _{2}+\lambda _{3}i-\lambda _{4}i}{4} & \frac{\lambda
_{1}+\lambda _{2}-\lambda _{3}-\lambda _{4}}{4} & \frac{\lambda _{1}+\lambda
_{2}+\lambda _{3}+\lambda _{4}}{4}%
\end{pmatrix}
\label{reali3}
\end{equation}%
realizes $\sigma .$
\end{theorem}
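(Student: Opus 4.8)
The plan is to recognize $M$ as a block matrix of the type treated in Theorem~\ref{main} and Remark~\ref{important}, and then to extract both its spectrum and its nonnegativity from Theorems~\ref{main} and~\ref{main2} with $\gamma=1$. Partition $M$ into four $2\times 2$ blocks $A_{ij}$, $1\le i,j\le 2$. A direct inspection shows that each block has the symmetric shape $\left(\begin{smallmatrix} a_{ij}&b_{ij}\\ b_{ij}&a_{ij}\end{smallmatrix}\right)$ demanded by Theorem~\ref{main}. Forming $s_{ij}=a_{ij}+b_{ij}$ and $c_{ij}=a_{ij}-b_{ij}$ collapses the denominators and, using $\overline{\lambda_3}=\lambda_4$ (so that $\lambda_3+\lambda_4=2\mathrm{Re}\lambda_3$ and $i(\lambda_4-\lambda_3)=2\mathrm{Im}\lambda_3$), yields the two $2\times 2$ matrices
\[
S=\frac12\begin{pmatrix}\lambda_1+\lambda_2 & \lambda_1-\lambda_2\\ \lambda_1-\lambda_2 & \lambda_1+\lambda_2\end{pmatrix},\qquad
C=\begin{pmatrix}\mathrm{Re}\lambda_3 & \mathrm{Im}\lambda_3\\ -\mathrm{Im}\lambda_3 & \mathrm{Re}\lambda_3\end{pmatrix}.
\]
By Remark~\ref{important}, $M$ is precisely the matrix $M_{+\gamma}$ of~(\ref{byblock}) associated with this $S$ and $C$ for $\gamma=1$. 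Note also that the hypotheses, being inequalities between $\lambda_1\pm\lambda_2$ and real numbers, force $\lambda_1,\lambda_2\in\mathbb{R}$, so all entries of $M$ are real.

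Next I would compute the two spectra. Since $S=\left(\begin{smallmatrix}p&q\\q&p\end{smallmatrix}\right)$ with $p=(\lambda_1+\lambda_2)/2$ and $q=(\lambda_1-\lambda_2)/2$, its eigenvalues are $p+q=\lambda_1$ and $p-q=\lambda_2$, so $\sigma(S)=\{\lambda_1,\lambda_2\}$. For $C$ the trace equals $2\mathrm{Re}\lambda_3=\lambda_3+\lambda_4$ and the determinant equals $(\mathrm{Re}\lambda_3)^2+(\mathrm{Im}\lambda_3)^2=|\lambda_3|^2=\lambda_3\lambda_4$, so its characteristic polynomial is $(t-\lambda_3)(t-\lambda_4)$ and $\sigma(C)=\{\lambda_3,\lambda_4\}$. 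Theorem~\ref{main} then already gives $\sigma(M)=\sigma(S)\cup\sigma(C)=\sigma$.

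It remains to establish nonnegativity, and here I would invoke Theorem~\ref{main2} with $\gamma=1$: it suffices to verify $|c_{ij}|\le s_{ij}$ for all $i,j$. This is the step where the main subtlety lies. Because $C$ is not itself permutative (its second row $(-\mathrm{Im}\lambda_3,\mathrm{Re}\lambda_3)$ is not a permutation of the first), the first-row reduction of Remark~\ref{cond} is unavailable, so the four inequalities must be checked individually. The off-diagonal entries demand $|\mathrm{Im}\lambda_3|\le(\lambda_1-\lambda_2)/2$, which is exactly the hypothesis $\lambda_1-\lambda_2\ge 2|\mathrm{Im}\lambda_3|$. The diagonal entries demand $|\mathrm{Re}\lambda_3|\le(\lambda_1+\lambda_2)/2$, i.e. both $\lambda_1+\lambda_2\ge 2\mathrm{Re}\lambda_3$ and $\lambda_1+\lambda_2\ge-2\mathrm{Re}\lambda_3$; the former is the third hypothesis, while the latter rearranges to $\lambda_1+\lambda_2+\lambda_3+\lambda_4\ge 0$, the trace condition $\sum_{i=1}^4\lambda_i\ge 0$. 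Thus the two halves of the absolute-value inequality on the diagonal of $C$ are supplied by two different hypotheses — the crux of the whole argument. With $|c_{ij}|\le s_{ij}$ in place, Theorem~\ref{main2} certifies that $M=M_{+1}$ is nonnegative and realizes $\sigma(S)\cup\sigma(C)=\sigma$.

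Finally, permutativity is read off directly from~(\ref{reali3}): writing the first row as $(p,q,r,s)$, each of rows $2$, $3$, $4$ contains these same four entries once each (row $2$ via the product of transpositions $(1\,2)(3\,4)$, and rows $3,4$ via the remaining rearrangements), so every row is a permutation of the first and $M$ is permutative. Being nonnegative, permutative, and of spectrum $\sigma$, the matrix $M$ is the desired realization of $\sigma$.
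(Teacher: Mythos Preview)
Your proof is correct and follows essentially the same route as the paper: both arguments partition $M$ into $2\times 2$ symmetric blocks, apply Theorem~\ref{main} to reduce to the $2\times 2$ matrices $S$ and $C$, compute their spectra, and then derive nonnegativity of the four entries from the stated hypotheses. The only cosmetic differences are that the paper works with symbolic entries $a,b,c,d$ first and then solves for them (motivating the formula for $M$), while you start from the given $M$ and go through Theorem~\ref{main2} for the nonnegativity check; your explicit bookkeeping of which hypothesis yields which half of $|\mathrm{Re}\,\lambda_3|\le(\lambda_1+\lambda_2)/2$ is in fact more detailed than the paper's one-line ``the conditions in the statement imply that the entries $a,b,c,d$ are nonnegative.''
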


\begin{proof}
Suppose that
\begin{equation*}
M=%
\begin{pmatrix}
a & b & c & d \\
b & a & d & c \\
d & c & a & b \\
c & d & b & a%
\end{pmatrix}.%
\end{equation*}%
By Theorem \ref{main}, the spectrum of $M$ is the union of the spectra of $S$
and $C$ respectively,  with
\begin{equation*}
S=%
\begin{pmatrix}
a+b & c+d \\
d+c & a+b%
\end{pmatrix}%
\text{ and }C=%
\begin{pmatrix}
a-b & c-d \\
d-c & a-b%
\end{pmatrix}%
.
\end{equation*}%
Thus,
\begin{equation*}
\sigma \left(M\right) =\left\{ a+b+c+d,a+b-c-d\right\} \cup \left\{
a-b+ic-id,a-b-ic+id\right\}.
\end{equation*}
Considering
\begin{eqnarray*}
\lambda _{1} &=&a+b+c+d \\
\lambda _{2} &=&a+b-c-d \\
\lambda _{3} &=&a-b+ic-id \\
\lambda _{4} &=&a-b-ic+id
\end{eqnarray*}
and solving we obtain,
\begin{eqnarray*}
a &=&\frac{\lambda _{1}+\lambda _{2}+\lambda _{3}+\lambda _{4}}{4} \\
b &=&\frac{\lambda _{1}+\lambda _{2}-\lambda _{3}-\lambda _{4}}{4} \\
c &=&\frac{\lambda _{1}-\lambda _{2}-\lambda _{3}i+\lambda _{4}i}{4} \\
d &=&\frac{\lambda _{1}-\lambda _{2}+\lambda _{3}i-\lambda _{4}i}{4}.
\end{eqnarray*}
The conditions in the statement imply that the entries $a,b,c$ and $d$ are nonnegative.
\end{proof}

\begin{example}
Since \ $\sigma =\left\{ 8,-6,-1+5i,-1-5i\right\} $ satisfies the conditions of Theorem \ref{first} we can obtain the permutative matrix in (\ref%
{reali3}),
\begin{equation*}
M=%
\begin{pmatrix}
0 & 1 & 6 & 1 \\
1 & 0 & 1 & 6 \\
1 & 6 & 0 & 1 \\
6 & 1 & 1 & 0%
\end{pmatrix}.
\end{equation*}
\end{example}

\begin{example}
Since $\sigma=\left\{ 8,2,3+2i,3-2i\right\} $ satisfies the conditions of Theorem \ref{first} we can obtain the permutative matrix in (\ref%
{reali3}),
\begin{equation}
M=
\begin{pmatrix}
4 & 1 & \frac{5}{2} & \frac{1}{2}\\
1 & 4 & \frac{1}{2} & \frac{5}{2}\\
\frac{1}{2} & \frac{5}{2} & 4 & 1\\
\frac{5}{2} & \frac{1}{2} & 1 & 4
\end{pmatrix}.
\end{equation}
\end{example}

\begin{remark}
The two above examples have its spectra out of the complex region
\[
\Gamma=\left\{  z\in\mathbb{C}:\operatorname{Re}z\leq0\text{ and }\left\vert
\operatorname{Im}z\right\vert \leq\left\vert \operatorname{Re}z\right\vert
\right\} ,
\]
(region presented in \cite{BoSo} for the realizability of some complex spectrum).
\end{remark}

\begin{remark}
In \cite{EG} after describing some previous results the following problem was considered:
Find a geometric representation for $\left(r,a,b\right) \in \mathbb{R}^3$ such that all lists $\{1,r,a+ib,a-ib\}$ having Perron root $1$ in the list, to be a NIEP's solution.
\end{remark}
\vspace{0.3cm}
Towards a response to this problem we propose the following result.

\begin{theorem}
Let consider the complex list $\sigma =\left\{1,r,a+ib,a-ib\right\}$ with $0\leq r\leq1$. If $|a|\leq\frac{1+r}{2}$ and $|b|\leq \frac{1-r}{2}$ then the list $\sigma$ is a realizable list whose realizing matrix is

\begin{equation}
M=
\begin{pmatrix}
\frac{ 1+r+2a}{4} & \frac{1+r-2a}{4} & \frac{1-r+2b}{4} & \frac{1-r-2b}{4} \\
\frac{1+r-2a}{4} & \frac{1+r+2a}{4} & \frac{1-r-2b}{4} & \frac{1-r+2b}{4} \\
\frac{1-r-2b}{4} & \frac{1-r+2b}{4} & \frac{1+r+2a}{4} & \frac{1+r-2a}{4} \\
\frac{1-r+2b}{4} & \frac{1-r-2b}{4} & \frac{1+r-2a}{4} & \frac{1+r+2a}{4}
\end{pmatrix}.
\end{equation}
\end{theorem}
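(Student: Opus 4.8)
The plan is to read this statement as the specialization of Theorem \ref{first} obtained by setting $\lambda_1=1$, $\lambda_2=r$, $\lambda_3=a+ib$ and $\lambda_4=a-ib$; then $\overline{\lambda}_3=\lambda_4$ holds automatically, and the conclusion will follow once the hypotheses are matched. First I would substitute these four values into the first-row entries $\frac{\lambda_1+\lambda_2+\lambda_3+\lambda_4}{4}$, $\frac{\lambda_1+\lambda_2-\lambda_3-\lambda_4}{4}$, $\frac{\lambda_1-\lambda_2-\lambda_3 i+\lambda_4 i}{4}$, $\frac{\lambda_1-\lambda_2+\lambda_3 i-\lambda_4 i}{4}$ of the realizing matrix (\ref{reali3}) and check that they reduce to the first row of $M$. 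The only non-immediate computation is the bookkeeping of the factors of $i$: since $\lambda_3+\lambda_4=2a$, $-\lambda_3 i+\lambda_4 i=i(\lambda_4-\lambda_3)=i(-2ib)=2b$ and $\lambda_3 i-\lambda_4 i=-2b$, the four entries become $\frac{1+r+2a}{4}$, $\frac{1+r-2a}{4}$, $\frac{1-r+2b}{4}$, $\frac{1-r-2b}{4}$, which is exactly the first row of $M$. Since (\ref{reali3}) and the displayed $M$ share the same permutation pattern, agreement of their first rows forces agreement of all entries.

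Next I would confirm that the three hypotheses of Theorem \ref{first} are forced by the two bounds $|a|\le\frac{1+r}{2}$, $|b|\le\frac{1-r}{2}$ together with $0\le r\le1$. The condition $\lambda_1+\lambda_2\ge 2\operatorname{Re}\lambda_3$ becomes $1+r\ge 2a$, which holds because $a\le|a|\le\frac{1+r}{2}$; the condition $\lambda_1-\lambda_2\ge 2|\operatorname{Im}\lambda_3|$ becomes $1-r\ge 2|b|$, which is precisely $|b|\le\frac{1-r}{2}$; and the trace condition $\sum_{i=1}^{4}\lambda_i=1+r+2a\ge0$ follows from $2a\ge -2|a|\ge -(1+r)$. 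With all three hypotheses in hand, Theorem \ref{first} directly delivers that $M$ is a nonnegative permutative matrix realizing $\sigma$.

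Finally I would record the direct route, which also pins down the role of each bound. Applying Theorem \ref{main} to $M$ (each of its $2$-by-$2$ blocks has the symmetric form required there) gives $\sigma(M)=\sigma(S)\cup\sigma(C)$ with $S=\left(\begin{smallmatrix}\frac{1+r}{2} & \frac{1-r}{2}\\ \frac{1-r}{2} & \frac{1+r}{2}\end{smallmatrix}\right)$ and $C=\left(\begin{smallmatrix} a & b\\ -b & a\end{smallmatrix}\right)$, whose spectra are $\{1,r\}$ and $\{a+ib,a-ib\}$, so $\sigma(M)=\sigma$ irrespective of the bounds. The bounds are exactly what is needed for nonnegativity: $|a|\le\frac{1+r}{2}$ gives $1+r\pm 2a\ge0$, $|b|\le\frac{1-r}{2}$ gives $1-r\pm 2b\ge0$, and $0\le r\le1$ gives $1-r\ge0$, so all sixteen entries of $M$ are nonnegative. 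I do not anticipate any genuine obstacle; the only place demanding care is the sign and imaginary-unit bookkeeping when verifying that $M$ coincides with the matrix (\ref{reali3}) of Theorem \ref{first}.
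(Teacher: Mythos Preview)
Your proposal is correct and follows essentially the same approach as the paper: both identify $(\lambda_1,\lambda_2,\lambda_3,\lambda_4)=(1,r,a+ib,a-ib)$ and reduce the result to an application of Theorem~\ref{first}, noting that the two bounds $|a|\le\tfrac{1+r}{2}$ and $|b|\le\tfrac{1-r}{2}$ encode exactly the three hypotheses (trace, $\lambda_1+\lambda_2\ge 2\operatorname{Re}\lambda_3$, and $\lambda_1-\lambda_2\ge 2|\operatorname{Im}\lambda_3|$) needed there. Your write-up is in fact more thorough than the paper's, since you also verify explicitly that the displayed $M$ coincides with the specialization of (\ref{reali3}) and add the independent check via Theorem~\ref{main}; neither addition is strictly necessary, but both are sound.
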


\begin{proof}
Suppose that $\left(1,r,a+ib,a-ib\right)=\left( \lambda _{1},\lambda _{2},\lambda _{3},\lambda
_{4}\right) \in \mathbb{C}^{4}$. Then $\overline{\lambda }_{3}=\lambda _{4}$. From Theorem \ref{first} the conditions:
\[
1+r \geq 2a \qquad and \qquad 1-r\geq 2|b|
\] must be fulfilled. Considering the property of the trace we obtain: $$1+r+2a\geq 0.$$ Therefore,
$$|a|\leq \frac{1+r}{2}\qquad and \qquad |b|\leq \frac{1-r}{2}$$
which is a spectral realization region obtained in terms of $r.$

\end{proof}

\section{Circulant and skew circulant matrices}

The class of circulant matrices and their properties are introduced in \cite{Davis} and plays an important role here.
In \cite{Karner} it was presented spectral decomposition of four types of real
circulant matrices. Among others, right circulants (whose elements topple
from right to left) as well as skew right circulants (whose elements change
their sign when toppling) are analyzed. The inherent periodicity of
circulant matrices means that they are closely related to Fourier analysis
and group theory.

Let $s=\left( s_{0},s_{1},\ldots ,s_{n-1}\right) ^{T},\ c=\left(
c_{0},c_{1},\ldots ,c_{n-1}\right) ^{T}\in \mathbb{R}^{n}$ be given.

\begin{definition} \cite{Davis, Karner}
A \emph{\ real right circulant matrix} (or simply, \emph{circulant matrix}), is a matrix of the form
\begin{equation*}
S\left( s\right) =
\begin{pmatrix}
s_{0} & s_{1} & \ldots  &  & s_{n-1} \\
s_{n-1} & s_{0} & s_{1} &  & s_{n-2} \\
s_{n-2} & \ddots  & \ddots  & \ddots  & \vdots  \\
\vdots  & \ddots  & \ddots  & s_{0} & s_{1} \\
s_{1} & \ldots  & s_{n-2} & s_{n-1} & s_{0}
\end{pmatrix}
\end{equation*}
where each row is a cyclic shift of the row above to the right.
\end{definition}

The matrix $S\left( s\right)$ is a special case of a Toeplitz matrix and it is clearly determined by its first row. Therefore, if no confusion arise, the above circulant matrix is also sometimes denoted by $circ(s_0,s_1,\ldots,s_{n-1}).$
\begin{definition} \cite{Karner}
A \emph{real} \emph{skew right circulant matrix} or simply a \emph{skew circulant matrix} is a matrix of the form
\begin{equation*}
C\left( c\right) =
\begin{pmatrix}
c_{0} & c_{1} & \ldots  &  & c_{n-1} \\
-c_{n-1} & c_{0} & c_{1} &  & c_{n-2} \\
-c_{n-2} & \ddots  & \ddots  & \ddots  & \vdots  \\
\vdots  & \ddots  & \ddots  & c_{0} & c_{1} \\
-c_{1} & \ldots  & -c_{n-2} & -c_{n-1} & c_{0}%
\end{pmatrix}%
\end{equation*}
\end{definition}

Note that this matrix is again a special case of a Toeplitz matrix. If no confusion arise, the above skew circulant matrix is sometimes  also denoted by $skwcirc(c_0,\ldots,c_{n-1}).$

The next concepts can be seen in  \cite{Karner}. Define the orthogonal (anti-diagonal unit) matrix $J_{m}\in \mathbb{R}^{m\times
m}$ as%
\begin{equation*}
J_{m}:=%
\begin{pmatrix}
0 & 0 & \ldots  & 1 \\
\vdots  &  & 1 & 0 \\
0 &  \rotatebox{45}{$\ldots$}  & \vdots  & \vdots\\
1 & \ldots  & 0 & 0
\end{pmatrix}%
.
\end{equation*}

The matrix
\begin{equation*}
\Gamma _{n}:=\left(
\begin{tabular}{l|l}
$1$ & $\ldots $ \\ \hline
$\vdots $ & $J_{n-1}$
\end{tabular}
\right)
\end{equation*}%
\bigskip is an orthogonal cyclic shift matrix (and a left circulant matrix).

It follows that,
\begin{equation*}
\Gamma _{n}=FF^{T}=F^{2},
\end{equation*}%
where the entries of the unitary discrete Fourier transform (DFT) matrix $F=\left(
f_{pq}\right) $ are given by
\begin{equation*}
f_{pq}:=\frac{1}{\sqrt{n}}\omega ^{pq},\ p=0,1,\ldots ,n-1,\ q=0,1,\ldots
,n-1,
\end{equation*}%
where
\begin{equation*}
\omega =\cos \frac{2\pi }{n}+i\sin \frac{2\pi }{n}=\exp \frac{2\pi i}{n}.
\end{equation*}%
For the orthogonal matrix%
\begin{equation*}
\Xi _{n}=\left(
\begin{tabular}{l|l}
$1$ & $\ldots $ \\ \hline
$\vdots $ & $-J_{n-1}$%
\end{tabular}%
\right)
\end{equation*}%
it is straightforward to verify that
\begin{equation*}
\Xi _{n}=GG^{T},
\end{equation*}%
where $G=\left( g_{pq}\right) $ with
\begin{equation*}
\text{ }g_{pq}=\frac{1}{\sqrt{n}}\omega ^{p\left( q+\frac{1}{2}\right) },\
\ p=0,1,\ldots ,n-1,\ q=0,1,\ldots ,n-1,
\end{equation*}%
is strongly related to the DFT matrix, i.e.,
\begin{equation*}
G=diag\left( 1,\iota ,\ldots ,\iota ^{n-1}\right) F
\end{equation*}%
with
\begin{equation*}
\iota =\omega ^{\frac{1}{2}}.
\end{equation*}%
Therefore, $G$ is also unitary.

Circulant matrices (right or skew right) $M$, $N$  have the following properties (see, for instance \cite{Karner}):
\begin{enumerate}
\item  $M+M$ and $M-N$ are circulant matrices;
\item $M^{T}$ is a circulant\ matrix;
\item  $MN$ is a circulant matrix;
\item $\sum\limits_{\ell
=1}^{k}\alpha _{\ell }M^{\ell }$ is a circulant matrix;
\item A circulant matrix with first row $s$, $S\left(s\right)$ (resp., a skew circulant with first row $c$, $C\left(c\right)$) is diagonalized by the matrix $F$ (resp., by the matrix $G$) thus, the eigenvectors of $S\left(s\right)$ (resp., $C\left(c\right)$) are independent of $s$ (resp., $c$);
\item The eigenvalues of $S\left(s\right)$ (resp., $C(c)$) can be obtained from the first row vector $s\in \mathbb{R}^{n}$ (resp., of $C\left(c\right) \in \mathbb{R}^{n}$).
\end{enumerate}

\begin{remark}
\label{entries}
Let $$S=circ\left(s_0,s_1,\ldots,s_{n-1}\right):=\left(s_{ij}\right)$$ and $$C=skwcirc\left(c_0,c_1,\ldots,c_{n-1}\right):=\left(c_{ij}\right),$$ then

\begin{equation*}
s_{ij}=\left \{
\begin{tabular}{ll}
$s_{j-i }$ & $1\leq i\leq j\leq n$ \\
$s_{n- i+j}$ & $1\leq j<i\leq n$
\end{tabular}\right.
\end{equation*}
and
\begin{equation}
c_{ij}=\left\{
\begin{tabular}{cc}
$c_{j-i}$ & $j\geq i$ \\
$-c_{n-i+j}$ & $i>j.$
\end{tabular}
\right.  \label{cij}
\end{equation}%
\end{remark}


\begin{theorem}
\label{eqcsk}
Consider \[J=%
\begin{pmatrix}
0 & 1 \\
1 & 0%
\end{pmatrix}.
\]
Let
\begin{equation}
S=circ\left(s_0,\ldots, s_{n-1}\right)
\end{equation}\\
and
\begin{equation}
\pm\gamma C=skwcirc\left(\pm\gamma c_0, \pm\gamma c_1,\ldots, \pm\gamma c_{n-1}\right)
\label{skw3}
\end{equation}
be a circulant and a skew circulant matrix, respectively. If
\begin{equation}
\left\vert c_{\iota}\right\vert \leq s_{\iota}, \quad
 \text{for}\ \iota=0,\ldots,n-1 \label{rc}
\end{equation}
 Then, the matrix $M$ obtained in (\ref{aspectM}) from $S$ and $C$ is a nonnegative permutative matrix which takes the form
\begin{equation}
N_\pm\gamma:= \begin{pmatrix}
N_{0} & N_{1} & \ldots  &  &  & N_{n-1} \\
JN_{n-1} & N_{0} & N_{1} & \ldots & \ldots  & N_{n-2} \\
JN_{n-2} & JN_{n-1} & N_{0} & \ldots & \ddots  & \vdots  \\
\vdots  &  & \ddots  & \ddots  &  &  \\
JN_{2}  & \ldots  &  & \\
JN_{1} & \ldots  &  &  & JN_{n-1} & N_{0}
\end{pmatrix}  \label{aspectMcir}
\end{equation}
 where, for $j=0,1,\ldots,n-1$ \[
N_{j}=
\begin{pmatrix}
\frac{s_j+\gamma c_j}{2} & \frac{s_j-\gamma c_j}{2} \\
\frac{s_j-\gamma c_j}{2} & \frac{s_j+\gamma c_j}{2}
\end{pmatrix}.
\]
\end{theorem}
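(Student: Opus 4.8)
The plan is to show by direct substitution that the generic matrix $M_{\pm\gamma}$ displayed in (\ref{aspectM}) acquires the block-circulant shape (\ref{aspectMcir}) once the circulant structure of $S$ and the skew-circulant structure of $C$ are imposed. I would treat the matrix $M_{+\gamma}$ in detail, the case $M_{-\gamma}$ being identical after replacing $\gamma$ by $-\gamma$. The object to analyse is the generic $2\times 2$ block
\[
M_{ij}=\begin{pmatrix} \frac{s_{ij}+\gamma c_{ij}}{2} & \frac{s_{ij}-\gamma c_{ij}}{2} \\ \frac{s_{ij}-\gamma c_{ij}}{2} & \frac{s_{ij}+\gamma c_{ij}}{2} \end{pmatrix},
\]
and the two cases of Remark \ref{entries} should be handled separately.

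When $j\geq i$ we have $s_{ij}=s_{j-i}$ and $c_{ij}=c_{j-i}$, so $M_{ij}$ is literally the block $N_{j-i}$. When $i>j$ the circulant identity gives $s_{ij}=s_{n-i+j}$ while the skew-circulant sign reversal gives $c_{ij}=-c_{n-i+j}$; inserting these, the diagonal and the off-diagonal entries of $M_{ij}$ interchange relative to $N_{n-i+j}$, and a one-line check confirms that this interchange is precisely left multiplication by $J$, i.e. $M_{ij}=JN_{n-i+j}$. Collecting the blocks for all $1\leq i,j\leq n$ then reproduces (\ref{aspectMcir}) exactly. I expect the index arithmetic of this case split --- and in particular pinning down where the factor $J$ appears, which is the sole place the \emph{skew} part of the hypothesis enters --- to be the most delicate point, although it is entirely routine.

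For nonnegativity I would appeal to (\ref{rc}): since $0\leq\gamma\leq1$, each entry $\tfrac12(s_j\pm\gamma c_j)$ of a block satisfies $s_j\pm\gamma c_j\geq s_j-|c_j|\geq0$, so every $N_j$, and hence every $JN_j$, is nonnegative; equivalently one may translate (\ref{rc}) into the full inequalities $|c_{ij}|\leq s_{ij}$ through Remark \ref{entries} and invoke Theorem \ref{main2}. Finally, for permutativity I would observe that each block $N_j$ carries the unordered pair $\{\tfrac12(s_j+\gamma c_j),\tfrac12(s_j-\gamma c_j)\}$ into each of the two rows it meets, and that passing from $N_j$ to $JN_j$ merely swaps the two rows of the block without changing this pair. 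Since the arrangement in (\ref{aspectMcir}) is block-circulant, each index $j\in\{0,\ldots,n-1\}$ occurs exactly once in every block-row, so every row of $N_{\pm\gamma}$ contains exactly one copy of each of these $2n$ numbers and is therefore a permutation of the first row, establishing that $N_{\pm\gamma}$ is permutative.
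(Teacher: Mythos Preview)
Your proposal is correct and follows the same approach as the paper, which dispatches the result in a single line as ``a direct consequence of the condition in the statement and of the construction of the matrix $M$ in (\ref{aspectM}).'' You have simply spelled out that construction in full --- the case split from Remark~\ref{entries}, the identification of the $J$ factor from the skew-circulant sign flip, and the permutativity check --- all of which the paper leaves implicit.
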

\begin{proof}
This result is a direct consequence of the condition in the statement and of the construction of the matrix $M$ in (\ref{aspectM}).
\end{proof}

To illustrate the above fact we give the following example.
\begin{example}
Let
$$
S=\left(
  \begin{array}{cccc}
2&	4&	0&	2\\
2&	2&	4&	0\\
0&	2&	2&	4\\
4&	0&	2&	2\\
 \end{array}
\right)\,\,\,\, and \,\,\,\,
C=\left(
  \begin{array}{cccc}
-1&	1&	0&	1\\
-1&	-1&	1&	0\\
 0&	-1&	-1&	1\\
-1&  0& -1&-1\\
 \end{array}
\right)
$$
be a circulant and a skew circulant matrix whose spectra, respectively, are $\{8,-4,2+2i,2-2i\}$ y $\{-1+i \sqrt{2},-1+i \sqrt{2},-1-i \sqrt{2},-1-i \sqrt{2}\}$. Both matrices satisfy (\ref{rc}). Thus, the matrix in (\ref{aspectMcir}) becomes

\begin{equation*}
M=\left(
\begin{array}{cccccccc}
\frac{1}{2} &	\frac{3}{2}	&	\frac{5}{2}	&	\frac{3}{2} &	0	& 0		&\frac{3}{2}	&\frac{1}{2}\\
\frac{3}{2}	&\frac{1}{2}&		\frac{3}{2}	&\frac{5}{2}&	0&	0&	\frac{1}{2}	&	\frac{3}{2}\\
\frac{1}{2}	&	\frac{3}{2} &	\frac{1}{2}	&	\frac{3}{2} &	\frac{5}{2}	&	\frac{3}{2}&	0&	0\\
\frac{3}{2}	&\frac{1}{2}	&	\frac{3}{2} &	\frac{1}{2}	&	\frac{3}{2} & \frac{5}{2} &	0&	0\\
0&	0	&\frac{1}{2}&	\frac{3}{2}	& \frac{1}{2}	&\frac{3}{2}&	\frac{5}{2}	&	\frac{3}{2}\\
0&	0 &	\frac{3}{2}&	\frac{1}{2}& \frac{3}{2} &\frac{1}{2} &	\frac{3}{2}	& \frac{5}{2}\\
\frac{3}{2} &	\frac{5}{2}&	0&	0&	\frac{1}{2}	&	\frac{3}{2}&	\frac{1}{2}&	\frac{3}{2}\\
\frac{5}{2}	&	\frac{3}{2}&	0&	0&	\frac{3}{2}	& \frac{1}{2} &	\frac{3}{2}	& \frac{1}{2}
\end{array}
\right)
\end{equation*}

and is a nonnegative permutative matrix with complex spectrum  $$\{8,-4,2+2i,2-2i,-1+i \sqrt{2},-1+i \sqrt{2},-1-i \sqrt{2},-1-i \sqrt{2}\}.$$
\end{example}



The following results characterize the circulant and skew circulant spectra.

\begin{theorem}
\cite{Karner} $S\left( s\right) =F^{\ast }\Lambda \left( s\right) F$, where $$
\Lambda \left( s\right) =diag\left( \lambda _{0}\left( s\right) ,\lambda
_{1}\left( s\right) ,\ldots ,\lambda _{n-1}\left( s\right) \right) $$
and
\begin{equation*}
\text{\ }\lambda _{k}\left( s\right) =\sum\limits_{j=0}^{n-1}s_{j}\omega
^{kj}\text{,\quad\ }k=0,1,\ldots ,n-1.
\end{equation*}
\end{theorem}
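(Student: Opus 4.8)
The plan is to exhibit an explicit eigenbasis for $S(s)$ built from the discrete Fourier modes and then repackage the resulting similarity as the stated factorization, using the unitarity of $F$. Since $F$ is unitary (indeed $FF^{\ast}=I$, a fact already implicit in $\Gamma_n=F^2$ together with the symmetry of $F$), the claim $S(s)=F^{\ast}\Lambda(s)F$ is equivalent to $F\,S(s)\,F^{\ast}=\Lambda(s)$, i.e. to the assertion that $F$ carries $S(s)$ to a diagonal matrix whose entries are the $\lambda_k(s)$. Concretely, I would prove that each Fourier mode is an eigenvector of $S(s)$ with precisely the advertised eigenvalue.

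First I would fix $0$-based indices $p,q\in\{0,\ldots,n-1\}$ and record, from Remark \ref{entries}, that the $(p,q)$ entry of $S(s)$ is $s_{(q-p)\bmod n}$; this cyclic-shift structure is the only structural fact about $S(s)$ that the argument uses. Next, for each $k$ I would take the candidate eigenvector $v^{(k)}$ with components $v^{(k)}_p=\omega^{kp}$ and compute $\bigl(S(s)\,v^{(k)}\bigr)_p=\sum_{q}s_{(q-p)\bmod n}\,\omega^{kq}$. Substituting the shift index $r=(q-p)\bmod n$ and using $\omega^{n}=1$ to write $\omega^{kq}=\omega^{kp}\omega^{kr}$, the sum factors as $\omega^{kp}\sum_{r}s_r\omega^{kr}=\lambda_k(s)\,v^{(k)}_p$. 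Hence $v^{(k)}$ is an eigenvector of $S(s)$ with eigenvalue $\lambda_k(s)=\sum_{j}s_j\omega^{kj}$, exactly the quantity in the statement.

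With the spectral pairs in hand I would assemble the matrix $V$ whose columns are the $v^{(k)}$; up to the normalizing factor $\tfrac{1}{\sqrt n}$ this is (a conjugate of) $F$, and the $v^{(k)}$ are linearly independent because the $\omega^{k}$ are $n$ distinct $n$-th roots of unity, so the associated Vandermonde determinant is nonzero. Therefore $S(s)$ is diagonalized by the Fourier matrix, and invoking $F^{\ast}=F^{-1}$ rewrites the similarity $S(s)=V\Lambda(s)V^{-1}$ in the stated factored form. To land on the exact orientation $F^{\ast}\Lambda(s)F$ (rather than $F\Lambda(s)F^{\ast}$) I would run the same computation with the conjugate modes $v^{(k)}_p=\omega^{-kp}$, which are the columns of $F^{\ast}$.

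The step demanding the most care is not any hard estimate but the bookkeeping of the modular index shift together with the conjugation convention: one must track whether the eigenvector paired with $\lambda_k(s)$ is the $k$-th column of $F$ or of $F^{\ast}$, since under the first-row circulant convention of Remark \ref{entries} the two choices differ by the reflection $k\mapsto(n-k)\bmod n$ in the eigenvalue labels. Pinning down this pairing is what distinguishes $F^{\ast}\Lambda(s)F$ from $F\Lambda(s)F^{\ast}$ and is the only place where the proof can slip; once the convention is fixed, the identity follows from the direct verification above and the completeness of the Fourier basis.
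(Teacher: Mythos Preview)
The paper does not supply its own proof of this theorem: it is quoted verbatim from \cite{Karner} and stated without argument, so there is nothing to compare against. Your proposal is the standard and correct proof that circulant matrices are diagonalized by the discrete Fourier transform---compute $S(s)$ against the Fourier mode $v^{(k)}_p=\omega^{kp}$, use the modular shift $r=(q-p)\bmod n$ to factor out $\omega^{kp}$, read off the eigenvalue $\lambda_k(s)=\sum_j s_j\omega^{kj}$, and then invoke unitarity of $F$ to package the similarity---and you have correctly identified that the only delicate point is the bookkeeping that decides between $F^{\ast}\Lambda F$ and $F\Lambda F^{\ast}$.
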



\begin{theorem}
\cite{Karner} Let $C\left( c\right) =C^{\ast }M\left( c\right) C$, where $$
M\left( c\right) =diag\left( \mu _{0}\left( c\right) ,\mu _{1}\left(
c\right) ,\ldots ,\mu _{n-1}\left( c\right) \right) $$ and
\begin{equation*}
\text{\ }\mu _{k}\left( c\right) =\sum\limits_{j=0}^{n-1}c_{j}\omega
^{\left( k+\frac{1}{2}\right) j}\text{,\quad\ }k=0,1,\ldots ,n-1.
\end{equation*}
\end{theorem}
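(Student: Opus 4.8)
The plan is to reduce the skew circulant case to the circulant case treated in the preceding theorem, by conjugating $C(c)$ with the diagonal ``twist'' $D=\operatorname{diag}(1,\iota,\iota^{2},\ldots,\iota^{n-1})$, where $\iota=\omega^{1/2}$. Since $|\iota|=1$, $D$ is unitary with $D^{*}=D^{-1}$, and because $G=DF$ (as recorded in the text), establishing that $D$ reduces $C(c)$ to a circulant will immediately identify the unitary matrix $G$ as the diagonalizer.

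First I would compute the entries of $D^{-1}C(c)D$. Using the description of a skew circulant in Remark~\ref{entries}, the $(p,q)$ entry of $D^{-1}C(c)D$ is $\iota^{\,q-p}$ times the corresponding entry of $C(c)$. For $q\ge p$ this equals $\iota^{\,q-p}c_{q-p}$, whereas for $q<p$ it equals $\iota^{\,q-p}\bigl(-c_{n+q-p}\bigr)$. The decisive observation is that $\iota^{-n}=\omega^{-n/2}=e^{-\pi i}=-1$, so in the second case the explicit minus sign of the skew circulant is cancelled and both cases collapse to $\iota^{\,m}c_{m}$ with $m=(q-p)\bmod n$. Hence $D^{-1}C(c)D=S(d)$ is the ordinary circulant with first row $d=(d_{0},\ldots,d_{n-1})$, $d_{j}=\iota^{\,j}c_{j}$. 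This sign bookkeeping, namely verifying that $\omega^{n/2}=-1$ is precisely what converts the skew wrap-around into the periodic wrap-around, is the one delicate step and the main (if modest) obstacle.

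With the reduction in hand, I would apply the preceding theorem on circulant matrices to diagonalize $S(d)$ by $F$, with eigenvalues $\lambda_{k}(d)=\sum_{j=0}^{n-1}d_{j}\omega^{kj}$. Substituting $d_{j}=\iota^{\,j}c_{j}=\omega^{\,j/2}c_{j}$ gives
\[
\lambda_{k}(d)=\sum_{j=0}^{n-1}c_{j}\,\omega^{\,j/2}\,\omega^{kj}=\sum_{j=0}^{n-1}c_{j}\,\omega^{\left(k+\frac{1}{2}\right)j}=\mu_{k}(c),
\]
so the diagonal factor is exactly $M(c)$; the half-integer shift $k\mapsto k+\tfrac12$ in the exponent is precisely the footprint of the twist $D$.

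Finally I would reassemble the factorization. Combining $C(c)=D\,S(d)\,D^{-1}$ with the circulant diagonalization and $G=DF$ (so that the diagonal matrices $D$ and $M(c)$ commute and merge) yields the asserted diagonalization of $C(c)$ by $G$ with diagonal part $M(c)$, consistent with the earlier assertion that a skew circulant is diagonalized by $G$ and that its eigenvectors are therefore independent of $c$. As an independent check I would verify the same conclusion directly by showing that the $k$th column $g^{(k)}$ of $G$, with $\bigl(g^{(k)}\bigr)_{p}=\tfrac{1}{\sqrt{n}}\omega^{p(k+1/2)}$, satisfies $C(c)\,g^{(k)}=\mu_{k}(c)\,g^{(k)}$: splitting the sum $\sum_{q}\bigl(C(c)\bigr)_{pq}\,\omega^{q(k+1/2)}$ at $q=p$ and again using $\omega^{-n(k+1/2)}=-1$ to merge the two pieces recovers $\mu_{k}(c)\,\omega^{p(k+1/2)}$, confirming both the eigenvalues and the eigenvectors at once.
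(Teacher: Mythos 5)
The paper itself contains no proof of this statement: it is quoted (typo included) from the cited reference \cite{Karner}, so there is nothing internal to compare against line by line. Judged on its own, your argument is correct and complete, and it is essentially the standard route. The twist reduction is exactly right: with $D=\operatorname{diag}(1,\iota,\ldots,\iota^{n-1})$ one has $\bigl(D^{-1}C(c)D\bigr)_{pq}=\iota^{q-p}\bigl(C(c)\bigr)_{pq}$, and since $\iota^{n}=\omega^{n/2}=-1$ the explicit sign in the sub-diagonal entries $-c_{n+q-p}$ is absorbed, yielding the ordinary circulant $S(d)$ with $d_{j}=\iota^{j}c_{j}$ and hence $\lambda_{k}(d)=\sum_{j=0}^{n-1}c_{j}\omega^{(k+1/2)j}=\mu_{k}(c)$, which is the whole content of the eigenvalue formula.

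One caveat concerns your reassembly step, where you say the diagonal matrices ``commute and merge.'' If you insert the preceding circulant theorem literally as printed, $S(d)=F^{*}\Lambda(d)F$, you obtain $C(c)=D F^{*}M(c)F D^{*}$, and since $D$ does not commute with $F^{*}$ this is \emph{not} of the form $G^{*}M(c)G$ or $G M(c)G^{*}$ with $G=DF$; the regrouping only goes through with the transposed orientation $S(d)=F\Lambda(d)F^{*}$, which is in fact the convention your own direct verification validates for $F=\bigl(\omega^{pq}/\sqrt{n}\bigr)$. Your independent eigenvector check --- that the $k$th column $g^{(k)}$ of $G$ satisfies $C(c)g^{(k)}=\mu_{k}(c)g^{(k)}$, using $\omega^{-n(k+1/2)}=-1$ to merge the two pieces of the split sum --- is correct and pins the factorization down as $C(c)=G M(c)G^{*}$, so your proof stands with that orientation; note that the theorem as printed, ``$C(c)=C^{*}M(c)C$,'' is itself a typo (the outer factors should be the unitary $G$), and since the paper only ever uses the eigenvalue formula $\mu_{k}(c)$, none of these orientation issues affects anything downstream.
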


\begin{corollary}
\label{fund} Let $$v:=\left( \lambda _{0}\left( s\right) ,\lambda _{1}\left(
s\right) ,\ldots ,\lambda _{n-1}\left( s\right) \right) ^{T}$$ and
$$u=\left(\mu _{0}\left( c\right) ,\mu _{1}\left( c\right) ,\ldots ,\mu _{n-1}\left(c\right) \right) ^{T}.$$
Then, if
\begin{enumerate}
\item $v=\sqrt{n}Fs$, thus
\begin{equation*}
s_{k}=\frac{1}{n}\sum\limits_{j=0}^{n-1}\lambda _{j}\omega ^{-kj}\text{%
,\quad } k=0,1,\ldots ,n-1
\end{equation*}%
and

\item  $u=\sqrt{n}Gc,$ thus
\begin{equation}
c_{k}=\frac{1}{n}\sum\limits_{j=0}^{n-1}\mu _{j}\omega ^{-\left( k+\frac{1}{%
2}\right) j}\text{,\quad }k=0,1,\ldots ,n-1. \label{condicao}
\end{equation}

\end{enumerate}
\end{corollary}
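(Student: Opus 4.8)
The plan is to read each identity directly off the corresponding spectral theorem and then invert it using the unitarity of the diagonalizing matrix; the whole corollary is just the statement that passing from the first row to the eigenvalues is the (scaled) DFT, and passing back is its inverse. For part (1), the spectral theorem for $S(s)$ gives $\lambda_{k}(s)=\sum_{j=0}^{n-1}s_{j}\omega^{kj}$, while $f_{pq}=\tfrac{1}{\sqrt{n}}\omega^{pq}$; hence the $k$-th component of $\sqrt{n}\,Fs$ is exactly $\sum_{j}s_{j}\omega^{kj}=\lambda_{k}(s)$, which is precisely $v=\sqrt{n}\,Fs$. Since $F$ is unitary we have $F^{-1}=F^{\ast}$, so $s=\tfrac{1}{\sqrt{n}}F^{\ast}v$. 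Using $(F^{\ast})_{kj}=\overline{f_{jk}}=\tfrac{1}{\sqrt{n}}\omega^{-kj}$ this collapses to $s_{k}=\tfrac{1}{n}\sum_{j=0}^{n-1}\lambda_{j}\omega^{-kj}$, as asserted.

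For part (2) I would run the same argument with $G$ in place of $F$. The spectral theorem for $C(c)$ gives the eigenvalues $\mu_{k}(c)=\sum_{j=0}^{n-1}c_{j}\omega^{(k+\frac12)j}$, so the vector $u$ records these eigenvalues as a (scaled) image of $c$ under $G$; inverting through $G^{-1}=G^{\ast}$, which is available because $G=\mathrm{diag}(1,\iota,\ldots,\iota^{n-1})F$ is a product of a unitary diagonal matrix with the unitary $F$ and is therefore itself unitary, yields $c=\tfrac{1}{\sqrt{n}}G^{\ast}u$. Reading off $(G^{\ast})_{kj}=\overline{g_{jk}}=\tfrac{1}{\sqrt{n}}\omega^{-(k+\frac12)j}$ then gives formula (\ref{condicao}).

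The only step that requires genuine care — and where I expect the entire difficulty of the skew-circulant case to sit — is the bookkeeping of the half-integer shift $\iota=\omega^{\frac12}$ together with the fact that, unlike $F$, the matrix $G$ is not symmetric. One must be scrupulous about which index carries the factor $\tfrac12$ when transposing and conjugating, since a misplaced $\omega^{k/2}$ versus $\omega^{j/2}$ silently breaks the identity. The clean way to pin this down is to prove the modified orthogonality relation $\tfrac{1}{n}\sum_{j=0}^{n-1}\omega^{(j+\frac12)(k-l)}=\delta_{kl}$ directly: for $k=l$ every summand is $1$, and for $k\ne l$ one factors out $\omega^{(k-l)/2}$ and sums the geometric series with ratio $\omega^{k-l}\ne1$, which vanishes, the role of the half-shift being absorbed by $\omega^{n/2}=-1$. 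Once this relation is in hand, composing $\mu_{k}(c)=\sum_{j}c_{j}\omega^{(k+\frac12)j}$ with the proposed inverse kernel telescopes to $c_{k}$, completing part (2); part (1) is the classical DFT orthogonality $\tfrac{1}{n}\sum_{j}\omega^{j(k-l)}=\delta_{kl}$ and needs no special handling.
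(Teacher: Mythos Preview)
The paper gives no explicit proof of this corollary; it is meant to follow immediately from the two diagonalization theorems by inverting the unitary transforms, and that is exactly the approach you take. Your treatment of part~(1) is correct and complete.

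In part~(2), however, you fall into precisely the trap you warn against. With the paper's definition $g_{pq}=\tfrac{1}{\sqrt{n}}\omega^{p(q+1/2)}$ one computes
\[
(\sqrt{n}\,Gc)_k=\sum_{j}c_j\,\omega^{k(j+1/2)},
\]
whereas the eigenvalue formula from the preceding theorem is $\mu_k=\sum_j c_j\,\omega^{(k+1/2)j}$. These differ by exactly the $\omega^{k/2}$ versus $\omega^{j/2}$ swap you flagged; what actually holds is $u=\sqrt{n}\,G^{T}c$, not $u=\sqrt{n}\,Gc$. Consequently your orthogonality relation $\tfrac{1}{n}\sum_j\omega^{(j+1/2)(k-l)}=\delta_{kl}$, while perfectly correct, does \emph{not} verify formula~(\ref{condicao}) as printed: substituting $\mu_j=\sum_l c_l\,\omega^{(j+1/2)l}$ and applying your relation yields
\[
c_k=\frac{1}{n}\sum_{j}\mu_j\,\omega^{-(j+\frac12)k},
\]
with the half-shift on the summation index $j$, not on $k$. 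A direct check at $n=2$ ($\omega=-1$, $\iota=i$, $\mu_0=c_0+ic_1$, $\mu_1=c_0-ic_1$) confirms that $\tfrac12(\mu_0+\mu_1\omega^{-1/2})\neq c_0$. So your sentence ``composing \ldots\ with the proposed inverse kernel telescopes to $c_k$'' is false for the kernel in~(\ref{condicao}); the argument you wrote proves the transposed formula. The stated corollary carries an index-swap typo, and your verification, had you carried it through rather than asserting it, would have exposed this rather than confirmed it.
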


The following results, deal with conjugate symmetry within the spectrum
of $S\left( s\right)$ and $C\left( c\right).$

\begin{theorem}
\label{newmark}\cite{Karner}
\begin{enumerate}
\item $\lambda _{n-k}\left( s\right) =\overline{\lambda _{k}\left( s\right) }
$, for $k=1,2,\ldots ,n-1$ and \ $\lambda _{0}\left( s\right)
=\sum\limits_{j=1}^{n-1}s_{j}.$

\item $\mu _{n-1-k}\left( c\right) =\overline{\mu _{k}\left( c\right) }$,
for $k=0,1,\ldots ,n-1.$
\end{enumerate}
\end{theorem}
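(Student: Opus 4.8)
The plan is to derive both identities directly from the closed forms $\lambda_k(s)=\sum_{j=0}^{n-1}s_j\omega^{kj}$ and $\mu_k(c)=\sum_{j=0}^{n-1}c_j\omega^{(k+\frac12)j}$ supplied by the two preceding theorems, exploiting only two facts: that $\omega=\exp(2\pi i/n)$ is a primitive $n$-th root of unity, so that $\omega^{nj}=1$ for every integer $j$; and that the generating vectors $s$ and $c$ are real, which lets the complex conjugation be carried inside the finite sum term by term.

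For part (1) I would first substitute the index $n-k$ and factor the exponent as $\omega^{(n-k)j}=\omega^{nj}\omega^{-kj}=\omega^{-kj}$, using $\omega^{nj}=1$. Since $|\omega|=1$ we have $\overline{\omega}=\omega^{-1}$, whence $\omega^{-kj}=\overline{\omega^{kj}}$; because each $s_j$ is real, $s_j\omega^{-kj}=\overline{s_j\omega^{kj}}$, and summing gives
$$\lambda_{n-k}(s)=\sum_{j=0}^{n-1}\overline{s_j\omega^{kj}}=\overline{\sum_{j=0}^{n-1}s_j\omega^{kj}}=\overline{\lambda_k(s)}.$$
The second assertion is the special case $k=0$: since $\omega^{0}=1$, the formula collapses to $\lambda_0(s)=\sum_{j=0}^{n-1}s_j$, the common row sum of the circulant matrix. (I would remark that the printed range $\sum_{j=1}^{n-1}$ should read $\sum_{j=0}^{n-1}$ for consistency with the eigenvalue formula.)

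For part (2) the argument is identical once the half-integer shift is handled. Writing the index as $n-1-k$ and collecting the exponent, $n-1-k+\tfrac12=n-(k+\tfrac12)$, so that, again using $\omega^{nj}=1$,
$$\mu_{n-1-k}(c)=\sum_{j=0}^{n-1}c_j\,\omega^{\left(n-(k+\frac12)\right)j}=\sum_{j=0}^{n-1}c_j\,\omega^{-(k+\frac12)j}.$$
Interpreting $\omega^{(k+\frac12)j}=\exp\!\big(2\pi i(k+\tfrac12)j/n\big)$, this quantity still has modulus one, so its conjugate is its inverse $\omega^{-(k+\frac12)j}$; with $c_j$ real this yields $\mu_{n-1-k}(c)=\overline{\mu_k(c)}$ exactly as before.

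The computation carries no deep obstacle; the single point requiring care is the half-integer exponent in part (2). One must fix the convention $\iota=\omega^{1/2}$ consistently and confirm that $\omega^{(k+\frac12)j}$ lies on the unit circle, so that conjugation coincides with inversion; otherwise the factorisation $\omega^{(n-(k+\frac12))j}=\omega^{nj}\omega^{-(k+\frac12)j}$ and the passage to the conjugate would not be justified. Everything else is the routine manipulation of a character sum with real coefficients.
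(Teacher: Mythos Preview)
Your proof is correct. The paper does not supply its own argument for this theorem; it is quoted from \cite{Karner} without proof, so there is nothing here against which to compare. Your direct computation from the explicit eigenvalue formulae $\lambda_k(s)=\sum_j s_j\omega^{kj}$ and $\mu_k(c)=\sum_j c_j\omega^{(k+\frac12)j}$ is the natural and standard route, and your observation that the printed $\lambda_0(s)=\sum_{j=1}^{n-1}s_j$ should read $\sum_{j=0}^{n-1}s_j$ is also correct.
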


\begin{theorem} \label{conditions}
Let $\Lambda =\left\{ \lambda _{0},\lambda _{1},\ldots ,\lambda
_{n-1}\right\} $ and $\Upsilon =\left\{ \mu _{0},\mu _{1},\ldots ,\mu
_{n-1}\right\} $ be the sets such that

\begin{enumerate}
\item $\lambda _{0}=\rho=\max \left\{ \left\vert \lambda _{j}\right\vert
:j=1,2,\ldots ,n\right\}$ and

\item $\lambda _{n-k}=\overline{\lambda _{k}}$, for $k=1,2,\ldots ,n-1$

\item $\mu _{n-1-k}=\overline{\mu _{k}}$, for $k=0,1,\ldots ,n-1,$
\end{enumerate}

and consider the sets
\begin{equation}
\small{\mathcal{P}=\left\{ \alpha\in \mathcal{S}_{n}\text{:\ }\alpha =%
\begin{pmatrix}
0 & 1 & 2 & \cdots  &  & \cdots  & n-1 \\
0 & \ell _{1} & \ell _{2} & \cdots  &  & \cdots  & \ell _{n-1}%
\end{pmatrix}%
;\ \lambda _{n-\ell _{k}}=\overline{\lambda }_{\ell _{k}}, k=1,2,\ldots
,n-1\right\} . } \label{p}
\end{equation}
and
\begin{equation}
\small{\mathcal{Q}=\left\{ \beta \in \mathcal{S}_{n}\text{:\ }\beta =%
\begin{pmatrix}
0 & 1 & 2 & \cdots &  & \cdots & n-1 \\
\ell _{0} & \ell _{1} & \ell _{2} & \cdots &  & \cdots & \ell _{n-1}%
\end{pmatrix}%
;\ \mu _{n-1-\ell _{k}}=\overline{\mu} _{\ell _{k}}, k=0,1,\ldots
,n-1\right\} .}  \label{q}
\end{equation}%
Let $0\leq\gamma\leq1$. A sufficient condition for the lists $\Lambda \cup \left(
\pm \gamma \right) \Upsilon $ to be realized by a permutative matrix $M_{\pm\gamma}$ as in (\ref{aspectM}) is
\begin{equation}
\lambda _{0}\geq \min_{\alpha \in \mathcal{P}}\max_{0\leq k\leq 2m}
\left\{
\begin{array}{c}
-2\sum\limits_{j=1}^{m}\rm{Re} \lambda _{\alpha\left(j\right)}\cos \frac{2k j\pi }{2m+1}- \\
\qquad -2\sum\limits_{j=1}^{m}\rm{Im}\lambda _{\alpha\left(j\right)} \sin\frac{2k j\pi }{2m+1}
\end{array}
\label{ns1}
\right.
\end{equation}
whenever $n=2m-1$
\begin{equation}
\lambda _{0}\geq \min_{\alpha \in \mathcal{P}}\max_{0\leq k\leq 2m+1}
\left\{
\begin{array}{c}
-2\sum\limits_{j=1}^{m-1}\rm{Re}\lambda _{\alpha\left(j\right)}\cos \frac{
2k j\pi }{m+1}-\left( -1\right) ^{k}\lambda _{m}- \\
\qquad -2\sum\limits_{j=1}^{m-1}\rm{Im}\lambda _{\alpha\left(j\right)}
\sin \frac{2k j\pi }{m+1}%
\end{array}%
  \label{ns2}
  \right.
\end{equation}
whenever $n=2m-2,\ $and there exists $\left( \alpha ,\beta \right) \in \mathcal{P}%
\times \mathcal{Q}$, for all $k=0,1,\ldots ,n-1$ such that the following inequalities hold

\begin{equation}
\sum\limits_{j=0}^{n-1}\lambda _{\alpha \left( j\right) }\omega ^{-kj}\geq
\left\vert \sum\limits_{j=0}^{n-1}\mu _{_{\beta \left( j\right) }}\omega
^{-\left( k+\frac{1}{2}\right) j}\right\vert \text{.\quad }
\label{c1}
\end{equation}
\end{theorem}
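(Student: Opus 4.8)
The plan is to reduce the statement to the majorization hypothesis of Theorem \ref{eqcsk} by recovering, from each admissible reordering of the two prescribed spectra, the first rows of a real circulant and of a real skew circulant matrix, and then to recognize the displayed inequalities as the explicit nonnegativity and domination conditions on those rows.

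First I would fix a pair $(\alpha,\beta)\in\mathcal{P}\times\mathcal{Q}$ and use it to reassign the prescribed eigenvalues to the Fourier frequencies, setting $\lambda'_j=\lambda_{\alpha(j)}$ and $\mu'_j=\mu_{\beta(j)}$. The sets $\mathcal{P}$ and $\mathcal{Q}$ are precisely those reorderings under which the reassigned lists retain the conjugate-symmetry patterns of Theorem \ref{newmark}, namely $\lambda'_{n-k}=\overline{\lambda'_k}$ and $\mu'_{n-1-k}=\overline{\mu'_k}$. Consequently, by Corollary \ref{fund}, the vectors with entries
\[
s_k=\frac{1}{n}\sum_{j=0}^{n-1}\lambda_{\alpha(j)}\,\omega^{-kj},\qquad
c_k=\frac{1}{n}\sum_{j=0}^{n-1}\mu_{\beta(j)}\,\omega^{-\left(k+\frac12\right)j}
\]
are real, and the circulant $S=\mathrm{circ}(s_0,\dots,s_{n-1})$ and skew circulant $C=\mathrm{skwcirc}(c_0,\dots,c_{n-1})$ have spectra exactly $\Lambda$ and $\Upsilon$, respectively.

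Next I would identify the two families of inequalities with the entrywise conditions on $s$ and $c$. Multiplying through by $n$, condition (\ref{c1}) is literally $s_k\geq|c_k|$ for every $k$, which is hypothesis (\ref{rc}) of Theorem \ref{eqcsk}; in particular it forces $s_k\geq0$. To locate (\ref{ns1}) and (\ref{ns2}), I would expand $s_k\geq0$ using $\alpha(0)=0$, $\lambda_0=\rho\in\mathbb{R}$, and pairing the index $j$ with $n-j$ via $\lambda'_{n-j}=\overline{\lambda'_j}$; the imaginary parts cancel and the sum collapses to a real cosine–sine expression. Isolating $\lambda_0$ then yields the right-hand sides of (\ref{ns1}) in the odd case and of (\ref{ns2}) in the even case, the middle term appearing from the self-conjugate frequency $j=m$ when $n$ is even. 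The minimum over $\alpha\in\mathcal{P}$ and maximum over $k$ encode the existence of a reordering for which $S$ is nonnegative; since the pair chosen from (\ref{c1}) already makes $s_k\geq|c_k|\geq0$, the same $\alpha$ automatically meets this Perron-type bound.

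Finally I would assemble the conclusion. With $|c_k|\leq s_k$ for all $k$, Theorem \ref{eqcsk} shows that the block matrix $M_{\pm\gamma}$ of (\ref{aspectM}) built from $S$ and $C$ is a nonnegative permutative matrix of the circulant-block form (\ref{aspectMcir}); and since $S$, $S+C$, $S-C$ are nonnegative, Theorem \ref{main2} guarantees that $M_{\pm\gamma}$ realizes $\sigma(S)\cup(\pm\gamma)\sigma(C)=\Lambda\cup(\pm\gamma)\Upsilon$. The main obstacle is the bookkeeping of the previous paragraph: verifying that the real reduction of $s_k\geq0$ reproduces the stated trigonometric sums exactly — including the denominators displayed in (\ref{ns1}) and (\ref{ns2}) and the contribution $(-1)^k\lambda_m$ of the self-conjugate frequency when $n$ is even — and confirming that the existential quantifier on $(\alpha,\beta)$ is consistent with the $\min_{\alpha}$ written in (\ref{ns1})–(\ref{ns2}).
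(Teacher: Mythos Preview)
Your proposal is correct and follows essentially the same route as the paper: recover a real circulant $S$ and a real skew circulant $C$ from the permuted lists via the inverse DFT formulas of Corollary~\ref{fund}, read condition~(\ref{c1}) as the entrywise domination $s_k\geq|c_k|$, and then apply Theorem~\ref{eqcsk} (together with Theorem~\ref{main2}) to produce the nonnegative permutative realizing matrix. The only difference is that the paper does not unpack conditions~(\ref{ns1})--(\ref{ns2}) from the expansion of $s_k\geq0$ as you do; it instead cites Theorem~4 of \cite{RSGuo}, where these inequalities are established as the necessary and sufficient condition for $\Lambda$ (an ``even conjugate'' list) to be the spectrum of a nonnegative circulant matrix, so your bookkeeping concern is handled there rather than carried out in the present proof.
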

\begin{proof}
A set $\Lambda =\left\{ \lambda _{0},\lambda _{1},\ldots ,\lambda
_{n-1}\right\} $ satisfying the conditons $1$. and $2.\ $in the statement is
called \emph{even conjugate }and by Theorem 4 in \cite{RSGuo} a necessary and
sufficient condition for $\Lambda $ to be the spectrum of a real circulant
matrix are given by (\ref{ns1}) and (\ref{ns2}). On the other hand, let us
consider the set of permutations (\ref{q})$.\ $For $\beta \in \mathcal{Q}$
the skew circulant matrix whose first row is given by
\begin{equation}
c_{\beta} =\frac{1}{\sqrt{n}}G^{\ast}\beta\left( u \right) \text{,}  \label{une}
\end{equation}
(where $\beta \left( u\right) $ is considered as in Definition \ref{ept}) is a
real skew circulant matrix and all the components of the vector $%
u=\left( \mu _{0},\mu _{1},\ldots ,\mu _{n-1}\right) $ belongs to its spectrum. The
conditions in (\ref{c1}) reflect the conditions in (\ref{mayorize})  as to  obtain a nonnegative matrix with the shape in  (\ref{aspectMcir}) it is clear that it is enough to compare only the first row of the matrices $S$ and $\left |C \right |= (\left| c_{ij} \right|)$.
\end{proof}

\begin{corollary}
Let $\Lambda =\left\{ 1,\lambda _{1},\ldots ,\lambda _{n-1}\right\} $ and $%
\Upsilon =\left\{ \mu _{0},\mu _{1},\ldots ,\mu _{n-1}\right\} $ be two lists
satisfying the conditons:
\begin{enumerate}
\item $1=\max \left\{ \left\vert \lambda _{j}\right\vert :j=1,2,\ldots
,n\right\} $

\item $\lambda _{n-k}=\overline{\lambda _{k}}$, for $k=1,2,\ldots ,n-1$

\item $\mu _{n-1-k}=\overline{\mu _{k}}$, for $k=0,1,\ldots ,n-1,$
\end{enumerate}
and consider the sets\ $\mathcal{P}$ and $\mathcal{Q}$ as in (\ref{p}) and (%
\ref{q}), respectively.\ Let $0\leq \gamma \leq 1$.\ A sufficient condition
for $\Lambda \cup \left( \pm \gamma \right) \Upsilon $ to be the spectrum of
a permutative nonnegative matrix is:

\begin{equation*}
\small{ 1\geq \min_{\alpha\in \mathcal{P}}\max_{0\leq k\leq 2m}\left\{
\begin{array}{l}
-2\sum\limits_{j=1}^{m}\rm{Re}\lambda _{\alpha\left(j\right)} \cos \frac{%
2kj\pi }{2m+1}-2\sum\limits_{j=1}^{m}\rm{Im}\lambda _{\alpha\left(j\right)}\sin
\frac{2kj\pi }{2m+1}%
\end{array}
\right.}
\end{equation*}

whenever $n=2m-1$ and

\begin{equation*}
\small{ 1\geq \min_{\alpha \in \mathcal{P}}\max_{0\leq k\leq 2m+1}\left\{
\begin{array}{l}
-2\sum\limits_{j=1}^{m-1}\rm{Re}\lambda _{\alpha\left(j\right)}\cos \frac{%
2kj\pi }{m+1}-\left( -1\right) ^{k}\lambda _{m} -2\sum\limits_{j=1}^{m-1}\rm{Im}\lambda _{\alpha\left(j\right)}
\sin \frac{2kj\pi }{m+1}
\end{array}
\right.}
\end{equation*}

whenever $n=2m-2. $ \\
Moreover, there exists  $\left( \alpha ,\beta \right) \in \mathcal{P}%
\times \mathcal{Q},$ for all $k=0,1,\ldots ,n-1$, such that the following inequalities hold

\begin{equation*}
\left\vert \sum\limits_{j=0}^{n-1}\mu _{_{\beta \left( j\right) }}\omega
^{-\left( k+\frac{1}{2}\right) j}\right\vert \leq
1+\sum\limits_{j=1}^{n-1}\lambda _{\alpha \left( j\right) }\omega ^{-kj}\text{%
\quad }.
\end{equation*}
Thus,
\begin{equation*}
\small{\bigcup_{\left(\alpha,\beta\right)\in \mathcal{P} \times \mathcal{Q}}
\{ \left(\lambda _{\alpha\left(1\right)},\ldots ,\lambda _{\alpha\left(n-1\right)}, \mu _{\beta\left(0\right)},\ldots , \mu _{\beta\left(n-1\right)}\right): \left\vert \sum\limits_{j=0}^{n-1}\mu _{_{\beta \left( j\right) }}\omega
^{-\left( k+\frac{1}{2}\right) j}\right\vert \leq
1+\sum\limits_{j=1}^{n-1}\lambda _{\alpha \left( j\right) }\omega ^{-kj} \}}
\end{equation*}

is a complex permutative realizability region for
 $$\left\{ 1,\lambda _{1},\ldots ,\lambda _{n-1},\mu _{0}, \mu _{1},\ldots , \mu _{n-1}\right\} $$ in terms of $\left\{
\lambda _{1}.\ldots ,\lambda _{n-1}\right\}.
$ In particular for
\begin{equation*}
r=\min_{\alpha\in \mathcal{P}}\left\{ \sum\limits_{j=1}^{n-1}\lambda _{\alpha
\left( j\right) }\omega ^{-kj}:k=0,1,\ldots ,n-1\right\}
\end{equation*}

the set
\begin{equation*}
\small{\bigcup_{\beta\in \mathcal{Q}}
\{ \left(\mu _{\beta\left(0\right)},\ldots , \mu _{\beta\left(n-1\right)}\right): \left\vert \sum\limits_{j=0}^{n-1}\mu _{_{\beta \left( j\right) }}\omega
^{-\left( k+\frac{1}{2}\right) j}\right\vert \leq 1+r,\quad k=0,1,\ldots
,n-1 \}}
\end{equation*}
is also a complex permutative realizability region in terms of $\left\{
\lambda _{1}.\ldots ,\lambda _{n-1}\right\} .$
\end{corollary}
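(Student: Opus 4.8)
The plan is to derive the corollary as the specialization of Theorem~\ref{conditions} to the normalized case $\lambda_0=1$, so that almost everything reduces to identifying terms. First I would check that hypotheses 1.--3.\ of the corollary are exactly conditions 1.--3.\ of Theorem~\ref{conditions} once we set $\rho=\lambda_0=1$: hypothesis~1 asserts that $1$ is the spectral radius of $\Lambda$, which makes $\Lambda$ even conjugate and guarantees $\mathcal{P}\neq\varnothing$ (the identity always lies in $\mathcal{P}$). The decisive structural remark is that, by the definition (\ref{p}), every $\alpha\in\mathcal{P}$ fixes $0$, i.e.\ $\alpha(0)=0$, so that $\lambda_{\alpha(0)}=\lambda_0=1$.

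With this in hand, the two displayed min--max inequalities are nothing but (\ref{ns1}) and (\ref{ns2}) with the left-hand side $\lambda_0$ written as $1$, and they carry over unchanged from the circulant realizability criterion invoked in the proof of Theorem~\ref{conditions}. For the remaining (``moreover'') inequality I would isolate the $j=0$ summand on the left of (\ref{c1}): since $\alpha(0)=0$ and $\omega^0=1$,
\[
\sum_{j=0}^{n-1}\lambda_{\alpha(j)}\omega^{-kj}=1+\sum_{j=1}^{n-1}\lambda_{\alpha(j)}\omega^{-kj},
\]
whereupon (\ref{c1}) becomes verbatim the inequality in the statement. Hence, when a pair $(\alpha,\beta)\in\mathcal{P}\times\mathcal{Q}$ satisfying it for every $k$ exists, Theorem~\ref{conditions} produces the permutative nonnegative matrix $M_{\pm\gamma}$ of the form (\ref{aspectM}) realizing $\Lambda\cup(\pm\gamma)\Upsilon$, which is the asserted sufficient condition. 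Here the sums $\sum_{j=1}^{n-1}\lambda_{\alpha(j)}\omega^{-kj}$ are real, being obtained from eigenvalues of a real circulant matrix through the even-conjugate structure encoded by $\mathcal{P}$, so the inequalities are meaningful.

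Next I would treat the realizability regions. Each point in the union over $(\alpha,\beta)\in\mathcal{P}\times\mathcal{Q}$ is by construction a reordering of $\left\{1,\lambda_1,\ldots,\lambda_{n-1},\mu_0,\ldots,\mu_{n-1}\right\}$ meeting the sufficient condition just established, hence realizable by a permutative matrix; so the union is a permutative realizability region. For the ``in particular'' refinement I would exploit $r$ as a uniform lower bound: by its very definition $r\leq\sum_{j=1}^{n-1}\lambda_{\alpha(j)}\omega^{-kj}$ for all $\alpha\in\mathcal{P}$ and all $k$, so the single condition $\left|\sum_{j=0}^{n-1}\mu_{\beta(j)}\omega^{-\left(k+\frac12\right)j}\right|\leq 1+r$ already forces, for every $k$ and every $\alpha$, the $\alpha$-dependent inequality of the corollary. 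Therefore any $\beta$ in the displayed $\mu$-set may be paired with an arbitrary $\alpha\in\mathcal{P}$, and the sufficient condition applies, making this $\mu$-set a realizability region as well.

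The whole argument is essentially bookkeeping, and I do not expect a genuine obstacle: all the analytic content---the circulant and skew-circulant spectral formulas, the conjugacy constraints (\ref{p})--(\ref{q}), and the majorization requirement (\ref{rc})---has already been absorbed into Theorem~\ref{conditions}. The one place demanding care is the order of the quantifiers, namely that a \emph{single} pair $(\alpha,\beta)$ must work simultaneously for \emph{all} $k$; this is precisely why replacing the $\alpha$-dependent bound by the uniform constant $1+r$ is legitimate and yields the cleaner, $\alpha$-free region in the last assertion.
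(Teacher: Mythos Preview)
Your proposal is correct and follows exactly the intended route: the paper states this result as an immediate corollary of Theorem~\ref{conditions} without supplying a separate proof, and your argument---specializing $\lambda_0=\rho$ to $1$, using that every $\alpha\in\mathcal{P}$ fixes $0$ to split off the $j=0$ term in~(\ref{c1}), and then passing to the uniform bound $1+r$---is precisely the bookkeeping that unpacks this specialization. There is nothing to add.
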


\begin{remark}
By the trace property, for all $\left( \alpha ,\beta \right) \in \mathcal{P}\times \mathcal{Q}$ the constant diagonal elements of the matrices $C=skwcirc(c_{\beta}^{T})$ and $S=circ(s_{\alpha}^{T})$ obtained from the vectors $c_{\beta}$ in (\ref{une}) and
\begin{equation}
s_{\alpha}= \frac{1}{\sqrt{n}}F^{\ast}\alpha\left( v \right) \text{,}  \label{due}
\end{equation}
(where $\alpha\left( v\right) $ is considered as in Definition \ref{ept})
coincide, respectively.
\end{remark}

\begin{theorem}
\cite{MAR}\label{main2 copy(2)} Let $S=\left( s_{ij}\right) $ be a nonnegative matrix of order $%
 n+1$ and consider the  $C=skwcirc\left(c_0,c_1,\ldots,c_{n-1}\right):=(c_{ij})$ whose spectra (counted with their
multiplicities) are $\left\{ \lambda _{0},\lambda _{1},\ldots ,\lambda _{n}\right\} $ and $\left\{\mu _{0},\mu _{1},\ldots ,\mu _{n-1}\right\} $,
respectively.\ Moreover, suppose that $
|c_{ij}|\leq s_{ij}$, \  $1\leq i,j \leq n. $  Then the nonnegative matrix
\begin{equation}
M=%
\begin{pmatrix}
\frac{s_{11}\pm \gamma c_{11}}{2} & \frac{s_{11}\mp \gamma c_{11}}{2} & \ldots & \ldots &
\frac{s_{1n}\pm \gamma c_{1n}}{2} & \frac{s_{1n}\mp \gamma c_{1n}}{2} & s_{1,n+1} \\
\frac{s_{11}\mp \gamma c_{11}}{2} & \frac{s_{11}\pm \gamma c_{11}}{2} & \ldots & \ldots &
\frac{s_{1n}\mp\gamma c_{1n}}{2} & \frac{s_{1n}\pm \gamma c_{1n}}{2} & s_{1,n+1} \\
\vdots & \vdots & \ddots & \ddots & \vdots & \vdots & \vdots \\
\vdots & \vdots & \ddots & \ddots & \vdots & \vdots & \vdots \\
\frac{s_{n1}\mp \gamma c_{n1}}{2} & \frac{s_{n1}\pm \gamma c_{n1}}{2} & \ldots & \ldots &
\frac{s_{nn}\pm \gamma c_{nn}}{2} & \frac{s_{nn}\mp \gamma c_{nn}}{2} & s_{n,n+1} \\
\frac{s_{n1}\pm \gamma c_{n1}}{2} & \frac{s_{n1}\mp \gamma c_{n1}}{2} & \ldots & \ldots &
\frac{s_{nn}\mp\gamma c_{nn}}{2} & \frac{s_{nn}\pm \gamma c_{11}}{2} & s_{n,n+1} \\
s^{1}_{n+1,1} & s^{2}_{n+1,1} & \ldots & \ldots & s^{1}_{n+1,n}%
& s^{2}_{n+1,n} & s_{n+1,n+1}%
\end{pmatrix}%
,  \label{matrixM2}
\end{equation}
where $$ s^{1}_{n+1,i}+s^{2}_{n+1,i}=s_{n+1,i} \qquad 1\leq i \leq n$$
realizes the list
\begin{equation*}
\left\{\lambda _{0},\lambda _{1},\ldots ,\lambda _{n},\pm \gamma\mu
_{0},\pm \gamma\mu _{1},\ldots ,\pm \gamma \mu _{n-1}\right\}.
\end{equation*}
\end{theorem}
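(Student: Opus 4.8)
The plan is to recognize that the matrix $M$ in (\ref{matrixM2}) is precisely an instance of the bordered block matrix $A$ appearing in Theorem \ref{main copy(1)}, with the role of the order-$n$ matrix ``$C$'' played by $\pm\gamma C$. Once this identification is in place, the spectral splitting $\sigma(A)=\sigma(S)\cup\sigma(C)$ furnished by that theorem delivers the claimed spectrum at once, and nonnegativity follows from the standing hypotheses.

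First I would set up the entry-by-entry correspondence. For $1\leq i,j\leq n$ put $a_{ij}=\frac{s_{ij}\pm\gamma c_{ij}}{2}$ and $b_{ij}=\frac{s_{ij}\mp\gamma c_{ij}}{2}$, so that the $2\times2$ block $\begin{pmatrix} a_{ij}&b_{ij}\\ b_{ij}&a_{ij}\end{pmatrix}$ coincides with the corresponding block of $M$; here $a_{ij}+b_{ij}=s_{ij}$ and $a_{ij}-b_{ij}=\pm\gamma c_{ij}$. For the last block-column ($j=n+1$) take $a_{i,n+1}=s_{i,n+1}$, matching the repeated entries $\begin{pmatrix} s_{i,n+1}\\ s_{i,n+1}\end{pmatrix}$; for the last block-row ($i=n+1$) take $a_{n+1,j}=s^{1}_{n+1,j}$ and $b_{n+1,j}=s^{2}_{n+1,j}$, so that $a_{n+1,j}+b_{n+1,j}=s_{n+1,j}$ by the hypothesis $s^{1}_{n+1,j}+s^{2}_{n+1,j}=s_{n+1,j}$; and set the corner $a_{n+1,n+1}=s_{n+1,n+1}$. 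With these choices the block matrix $A$ of Theorem \ref{main copy(1)} is exactly $M$, its associated order-$(n+1)$ matrix is $S$, and its associated order-$n$ matrix has entries $a_{ij}-b_{ij}=\pm\gamma c_{ij}$, i.e. it equals $\pm\gamma C$.

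Then I would invoke Theorem \ref{main copy(1)} to conclude
\[
\sigma(M)=\sigma(S)\cup\sigma(\pm\gamma C)=\{\lambda_0,\ldots,\lambda_n\}\cup\{\pm\gamma\mu_0,\ldots,\pm\gamma\mu_{n-1}\},
\]
using that $\sigma(\pm\gamma C)=\pm\gamma\,\sigma(C)$ since scaling a matrix by $\pm\gamma$ scales its eigenvalues by $\pm\gamma$. This is exactly the list in the statement. For nonnegativity, the entries of the upper-left $2n\times 2n$ part are $\frac{s_{ij}\pm\gamma c_{ij}}{2}$; because $C=skwcirc(c_0,\ldots,c_{n-1})$ has real entries and $0\leq\gamma\leq1$, we have $|\gamma c_{ij}|\leq|c_{ij}|\leq s_{ij}$, whence $s_{ij}\pm\gamma c_{ij}\geq0$. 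The last-column entries $s_{i,n+1}$ and the corner $s_{n+1,n+1}$ are nonnegative because $S$ is, and the last-row entries $s^{1}_{n+1,j},s^{2}_{n+1,j}$ form a nonnegative splitting of the nonnegative number $s_{n+1,j}$, which always exists (e.g. $s^{1}_{n+1,j}=s^{2}_{n+1,j}=\frac{s_{n+1,j}}{2}$).

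I expect the main obstacle to be bookkeeping rather than anything conceptual: one must carefully match the \emph{asymmetric} border of $M$ to the template of Theorem \ref{main copy(1)}, in which the last block-column carries a single repeated value $a_{i,n+1}$ while the last block-row splits into two entries $a_{n+1,j},b_{n+1,j}$ summing to $s_{n+1,j}$. The only genuine hypothesis that must be tracked is that the splitting $s^{1}_{n+1,j},s^{2}_{n+1,j}$ be chosen nonnegative, as this is what guarantees the asserted nonnegativity of $M$.
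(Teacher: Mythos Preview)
Your proposal is correct. Note that the present paper does not actually supply a proof of this theorem: it is quoted from \cite{MAR} without argument. The route you take---identifying $M$ as an instance of the bordered block matrix in Theorem~\ref{main copy(1)} with the order-$n$ block equal to $\pm\gamma C$, then reading off $\sigma(M)=\sigma(S)\cup\sigma(\pm\gamma C)$ and checking nonnegativity from $|c_{ij}|\leq s_{ij}$---is exactly the odd-order analogue of how Theorem~\ref{main2} is obtained from Theorem~\ref{main}, and is the natural (and almost certainly the intended) proof. Two small remarks: first, the statement as printed omits the standing hypothesis $0\leq\gamma\leq1$ that appears in Theorem~\ref{main2}; you correctly invoke it for the nonnegativity step. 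Second, your argument nowhere uses the skew-circulant structure of $C$ beyond the fact that $C$ is real---this is fine, since the spectrum of $C$ is simply given as a hypothesis, and the skew-circulant form only becomes relevant in the subsequent applications (Problem~\ref{P1} and the proposition that follows).
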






From the above facts the following definition can be stated.
\begin{definition}
Given the sets $\Upsilon =\left\{ \mu _{0},\mu _{1},\ldots ,\mu
_{n-1}\right\} $ and $\Lambda =\left\{ \rho ,\lambda _{1},\lambda
_{2},\ldots ,\lambda _{n}\right\} $ we say that $\Upsilon $ (resp. $%
\Lambda $) is skew circulant (resp. circulant) spectrum if the condition $3$.\
(resp. $1$ and $2.$)\ in Theorem \ref{conditions} holds.
\end{definition}
The next problem can be formulated:

\begin{problem} \label{P1}
Given the skew circulant spectrum $\Upsilon =\left\{ \mu
_{0},\mu _{1},\ldots ,\mu _{n-1}\right\}, $ under which conditions does there
exist a realizable circulant spectrum $\Lambda =\left\{ \rho ,\lambda
_{1},\ldots ,\lambda _{n}\right\} $
 such that $\Lambda \cup \pm \gamma \Upsilon $ is realizable,
for all $\gamma \in \left[ 0,1\right]? $
\end{problem}

In order to  give an answer to this problem, we need to recall the following facts:

\begin{enumerate}
\item From formula (\ref{condicao}) at item 2. in Corollary \ref{fund} and Theorem \ref{conditions}, the following
inequalities can be easily obtained
\begin{equation*}
\left\vert c_{\ell}\right\vert \leq\max_{\beta\in\mathcal{Q}, \
0\leq k\leq n-1} \frac{1}{n}
\left\vert \sum\limits_{j=0}^{n-1}\mu _{_{\beta \left( j\right) }}\omega
^{-\left( k+\frac{1}{2}\right) j}\right\vert \  \text{,\quad }\ell=0,1,\ldots ,n-1.
\end{equation*}

\item Let
\begin{equation*}
\chi= \max_{\beta\in\mathcal{Q}, \
0\leq k\leq n-1} \frac{1}{n}
\left\vert \sum\limits_{j=0}^{n-1}\mu _{_{\beta \left( j\right) }}\omega
^{-\left( k+\frac{1}{2}\right) j}\right\vert  \label{chi}
\end{equation*}

\item Eqs. (\ref{ns1}) and (\ref{ns2}) give necessary and sufficient conditions for
$$\widetilde{\Lambda }=\left\{ \rho -\left( n+1\right) \chi,\lambda _{1},\lambda _{2},\ldots ,\lambda _{n}\right\} $$ to be the spectrum
of a nonnegative circulant matrix $B=\left( b_{ij}\right) $. Furthermore, by
Perron Frobenius Theory (see \cite{BergmanPlemmons}), $B$ is an irreducible nonnegative matrix
and the positive $(n+1)$ -dimensional eigenvector $\mathbf{e}=\left(
1,\ldots ,1 \right) ^{T}$ is associated to the eigenvalue $\rho
-\left( n+1\right) \chi $ of $B$.
\item By Brauer Theorem (see \cite{RS}) the matrix $R=B+\chi \mathbf{ee}^{T}$ has
spectrum
$$\widetilde{\Lambda }\setminus \left\{ \rho -\left( n+1\right) \chi
\right\} \cup \left\{ \rho -\left( n+1\right) \chi +\chi \mathbf{e}^{T}%
\mathbf{e=}\rho \right\} =\Lambda .$$

\item Moreover, for the $\left( i,j\right) $-entry of $R:=\left(
r_{ij}\right) $ we have $r_{ij}=b_{ij}+\chi \geq \chi \geq \left\vert
c_{k}\right\vert, \quad k=0,1,\ldots ,n-1.$

\item By Theorem \ref{main2 copy(2)} a nonnegative matrix of order $2n+1$ of the form of $M$ in (\ref{matrixM2}) can be constructed from the matrices $R$ and $skwcirc\left(c_0,c_1,\ldots,c_{n-1}\right).$

\end{enumerate}

In consequence, the following result can be stated.

\begin{proposition}
Let $\Upsilon$ be the skew circulant spectrum $\Upsilon =\left\{ \mu _{0},\mu
_{1},\ldots ,\mu _{n-1}\right\} $. If there exists a nonnegative circulant
matrix with spectrum $$\widetilde{\Lambda }=\left\{ \rho -\left( n+1\right)
\chi ,\lambda _{1},\lambda _{2},\ldots ,\lambda _{n}\right\},$$ then there
exists a nonnegative matrix with spectrum $\Lambda \cup \pm \gamma \Upsilon $
where $\gamma \in \left[ 0,1\right] $ and $$\Lambda =\left\{ \rho ,\lambda _{1},\lambda _{2},\ldots ,\lambda _{n}\right\} .\ $$
In particular, if $\lambda_{\iota}=0, \iota=1,\ldots,n-1$ the matrix $M$ in (\ref{matrixM2}) obtained from $C$ and the rank one matrix $$S=circ(\frac{\rho}{n+1},\ldots,\frac{\rho}{n+1})$$ is permutative.
\end{proposition}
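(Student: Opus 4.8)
The plan is to chain together the six facts recorded immediately before the statement. I would begin from the hypothesis: $\widetilde{\Lambda}=\left\{\rho-(n+1)\chi,\lambda_1,\ldots,\lambda_n\right\}$ is realized by a nonnegative circulant matrix $B=(b_{ij})$ of order $n+1$. Since every circulant matrix has the all-ones vector $\mathbf{e}=(1,\ldots,1)^{T}$ as an eigenvector, with eigenvalue equal to its constant row sum, and since $\rho-(n+1)\chi$ is the Perron root of $B$, Perron--Frobenius theory (see \cite{BergmanPlemmons}) forces $\mathbf{e}$ to be precisely the Perron eigenvector of $B$ associated with $\rho-(n+1)\chi$. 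This identification is what lets the subsequent rank-one shift move the correct eigenvalue.

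Next I would apply Brauer's theorem (see \cite{RS}) to $R:=B+\chi\,\mathbf{e}\mathbf{e}^{T}$. Because the perturbation is rank one along the Perron eigenvector, the spectrum of $R$ equals that of $B$ with the single eigenvalue $\rho-(n+1)\chi$ replaced by $\rho-(n+1)\chi+\chi\,\mathbf{e}^{T}\mathbf{e}=\rho$, so $\sigma(R)=\Lambda=\left\{\rho,\lambda_1,\ldots,\lambda_n\right\}$ while all other eigenvalues are untouched.

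The decisive step is then the entry-wise domination required by Theorem \ref{main2 copy(2)}. Each entry of $R$ obeys $r_{ij}=b_{ij}+\chi\geq\chi$ since $B\geq 0$, and by the definition of $\chi$ together with the bound $|c_{\ell}|\leq\chi$ for $\ell=0,\ldots,n-1$ recorded in the facts above, one gets $r_{ij}\geq\chi\geq|c_{ij}|$ for all $i,j$. Setting $S:=R$, this is exactly the hypothesis $|c_{ij}|\leq s_{ij}$ of Theorem \ref{main2 copy(2)}; applying that theorem to $R$ and $C=skwcirc(c_0,\ldots,c_{n-1})$ produces the nonnegative matrix $M$ of order $2n+1$ in (\ref{matrixM2}) realizing $\Lambda\cup\pm\gamma\Upsilon$ for every $\gamma\in[0,1]$, which establishes the general assertion. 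I expect this domination to be the main obstacle: it relies on the Brauer shift raising \emph{every} entry by the same constant $\chi$, and on $\chi$ having been defined precisely as the quantity that bounds each $|c_{\ell}|$; both the rank-one structure and the constancy of the shift are essential.

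For the final ``in particular'' clause I would specialize to $\lambda_1=\cdots=\lambda_n=0$, so that $\widetilde{\Lambda}=\left\{\rho-(n+1)\chi,0,\ldots,0\right\}$. Inverting the discrete Fourier transform of the first row shows the realizing nonnegative circulant is forced to be the rank-one matrix $B=\left(\tfrac{\rho}{n+1}-\chi\right)\mathbf{e}\mathbf{e}^{T}$, whence the Brauer shift yields $R=\tfrac{\rho}{n+1}\mathbf{e}\mathbf{e}^{T}=circ\!\left(\tfrac{\rho}{n+1},\ldots,\tfrac{\rho}{n+1}\right)$, a circulant with all entries equal. Since $S:=R$ is now itself circulant, the matrix $M$ built from $R$ and the skew circulant $C$ inherits the block-circulant permutative shape of (\ref{aspectMcir}) in Theorem \ref{eqcsk}, and is therefore permutative.
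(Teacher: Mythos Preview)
Your argument is essentially the paper's own: the proposition is stated there as a direct consequence of the six enumerated facts preceding it, and you chain those facts in the same order (Perron eigenvector identification for $B$, Brauer shift to $R$, entry-wise domination $r_{ij}\geq\chi\geq|c_k|$, then Theorem~\ref{main2 copy(2)}). One small imprecision in your final paragraph: you appeal to Theorem~\ref{eqcsk} and the block pattern~(\ref{aspectMcir}) to conclude permutativity, but that result concerns the even-order construction ($S$ and $C$ both $n\times n$, $M$ of order $2n$), whereas here $S=R$ is $(n+1)\times(n+1)$, $C$ is $n\times n$, and $M$ is the odd-order matrix~(\ref{matrixM2}); the permutativity of~(\ref{matrixM2}) when $S$ has all entries equal to $\rho/(n+1)$ and $C$ is skew circulant must be checked directly (each row of $C$ is a signed permutation of the first, so rows $1,\ldots,2n$ of $M$ share the same multiset of entries, and the free split $s^{1}_{n+1,i}+s^{2}_{n+1,i}=s_{n+1,i}$ in the last row can be chosen to match).
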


The following example shows that the condition in the above proposition can be weakened.
\begin{example}
The spectrum $\Omega =\left\{ 15,1,7,2+5i,2-5i,\frac{5+i\sqrt{3}
}{2},\frac{5-i\sqrt{3}}{2}\right\} $ can be partitioned into the
circulant spectrum $\Lambda =\left\{ 15,1,2+5i,2-5i\right\} $ and the skew
circulant spectrum $\Upsilon =\left\{ 7,\frac{5+i\sqrt{3}}{2}+\frac{5-i\sqrt{3}}{2},
\right\} $.\ The first and the second one
are realized by the circulant matrix $S$ and the skew circulant matrix $C$, respectively:
\[
S=%
\begin{pmatrix}
5 & 6 & 3 & 1 \\
1 & 5 & 6 & 3 \\
3 & 1 & 5 & 6 \\
6 & 3 & 1 & 5%
\end{pmatrix}%
, \quad \text{ }C=%
\begin{pmatrix}
4 & -2 & 1 \\
-1 & 4 & -2 \\
2 & -1 & 4%
\end{pmatrix}%
.
\]%
So, the union  $\Omega$ is realized by the nonnegative matrix
\[
M=
\begin{pmatrix}
\frac{9}{2} & \frac{1}{2} & 2 & 4 & 2 & 1 & 1 \\
\frac{1}{2} & \frac{9}{2} & 4 & 2 & 1 & 2 & 1 \\
0 & 1 & \frac{9}{2} & \frac{1}{2} & 2 & 4 & 3 \\
1 & 0 & \frac{1}{2} & \frac{9}{2} & 4 & 2 & 3 \\
\frac{5}{2} & \frac{1}{2} & 0 & 1 & \frac{9}{2} & \frac{1}{2} & 6 \\
\frac{1}{2} & \frac{5}{2} & 1 & 0 & \frac{1}{2} & \frac{9}{2} & 6 \\
3 & 3 & 3 & 0 & 1 & 0 & 5%
\end{pmatrix}.%
\]
\end{example}
The next definition generalizes the definitions of circulant and skew ciculant matrices.

\begin{definition}
\label{abscir}
Let $C$ be a matrix of order $n$, we will say that $C=\left(c_{ij}\right)$ is an \textit{absolutely circulant matrix} if the absolute value matrix of $C$, $\left\vert C \right\vert:=\left(\left\vert c_{ij}\right\vert\right)$ is circulant, being the diagonal element either non positive or all non negative.
If $C$ is an absolutely circulant matrix with first row $\left(c_{0},c_{1},\ldots ,c_{n-1}\right)$ we write,$$C=abscirc\left(c_{0},c_{1},\ldots ,c_{n-1}\right).$$
\end{definition}

\begin{lemma}\label{circ2}
Let $C=abscirc\left(c_{0},c_{1},\ldots ,c_{n-1}\right):=\left(c_{ij}\right) $. Then
\begin{equation}
c_{ij}=\left\{
\begin{tabular}{cc}
$\pm c_{j-i}$ & $j\geq i$ \\
$\pm c_{n-i+j}$ & $i>j.$%
\end{tabular}%
\right.   \label{cij2}
\end{equation}%
\end{lemma}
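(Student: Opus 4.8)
The plan is to reduce the claim to the entry description of an ordinary circulant matrix already recorded in Remark \ref{entries}, and then restore the signs. First I would unwind Definition \ref{abscir}: the hypothesis $C=abscirc\left(c_{0},c_{1},\ldots ,c_{n-1}\right)$ means precisely that the absolute value matrix $\left\vert C\right\vert =\left(\left\vert c_{ij}\right\vert\right)$ is circulant. Since the first row of $C$ is $\left(c_{0},c_{1},\ldots ,c_{n-1}\right)$, the first row of $\left\vert C\right\vert$ is $\left(\left\vert c_{0}\right\vert,\left\vert c_{1}\right\vert,\ldots ,\left\vert c_{n-1}\right\vert\right)$, so that $\left\vert C\right\vert =circ\left(\left\vert c_{0}\right\vert,\ldots ,\left\vert c_{n-1}\right\vert\right)$.

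Next I would apply Remark \ref{entries} verbatim to $\left\vert C\right\vert$. That remark describes the $(i,j)$-entry of any circulant $circ\left(t_{0},\ldots ,t_{n-1}\right)$ as $t_{j-i}$ when $j\geq i$ and as $t_{n-i+j}$ when $i>j$. Specializing $t_{k}=\left\vert c_{k}\right\vert$ yields
\[
\left\vert c_{ij}\right\vert=\begin{cases} \left\vert c_{j-i}\right\vert, & j\geq i, \\ \left\vert c_{n-i+j}\right\vert, & i>j. \end{cases}
\]

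The remaining step is to pass from equality of moduli to equality up to sign. Because the matrices treated in this section are real (the underlying circulant and skew circulant matrices of Definitions in \cite{Davis,Karner} have real entries), for any two real numbers $x,y$ the relation $\left\vert x\right\vert=\left\vert y\right\vert$ forces $x=\pm y$. Applying this entrywise to the two displayed identities gives $c_{ij}=\pm c_{j-i}$ when $j\geq i$ and $c_{ij}=\pm c_{n-i+j}$ when $i>j$, which is exactly (\ref{cij2}). As a consistency check, the skew circulant case (\ref{cij}) is recovered by choosing the sign $+$ on and above the main diagonal and $-$ strictly below it.

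I expect the only delicate point to be purely notational rather than mathematical: keeping the two index ranges $j\geq i$ and $i>j$ aligned with the cyclic shift $n-i+j$ inherited from Remark \ref{entries}, and observing that the diagonal-sign clause of Definition \ref{abscir} (the diagonal entries being all non-negative or all non-positive) makes the description well posed along the main diagonal $j=i$, where every entry equals $\pm c_{0}$ with a common sign. No further estimation is needed, so the lemma follows directly from the definition together with Remark \ref{entries}.
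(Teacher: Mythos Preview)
Your argument is correct and follows essentially the same approach as the paper, which simply states that the lemma is a direct consequence of Definition~\ref{abscir}. You have merely made explicit the step through Remark~\ref{entries} that the paper leaves implicit, so there is nothing to correct.
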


\begin{proof}
Is a direct consequence of the Definition \ref{abscir}.
\end{proof}
\begin{remark}
Lemma \ref{circ2} really takes part of the definition of an absolutely circulant matrix as it gives the change of the sign (or not) of the entry under the diagonal with respect to its pair up the diagonal.
\end{remark}

The example below gives an absolutely circulant matrix which is neither circulant nor skew circulant.

\begin{example}
Let us consider
$$
C=\left(
\begin{array}{ccc}
1&-2&3\\
3&1&2\\
-2&-3&1\\
\end{array}
\right).
$$
It is clear that $C$ is an absolutely circulant matrix which is neither circulant nor skew circulant. In fact, $|C|=circ(1,2,3)$.
\end{example}

\begin{proposition}
Let $S=circ(s_0,s_1,\ldots,s_{n-1})$ and $C=abscirc\left(c_{0},c_{1},\ldots ,c_{n-1}\right)$ be a circulant and an absolutely circulant matrix, respectively such that $$\left\vert c_{\iota}\right \vert \leq s_{\iota} \ \iota=0,\ldots, n-1. $$ Then the matrix $M$ in (\ref{aspectM}) is a nonnegative permutative matrix.
\end{proposition}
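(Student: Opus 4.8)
The plan is to verify the two asserted properties of $M$ separately, namely nonnegativity and permutativity. The matrix $M$ in (\ref{aspectM}) is assembled from the $2$-by-$2$ blocks $M_{ij}$ whose entries are $\frac{s_{ij}\pm\gamma c_{ij}}{2}$ and $\frac{s_{ij}\mp\gamma c_{ij}}{2}$, so everything reduces to understanding the two families of scalars $(s_{ij})$ and $(c_{ij})$. The essential structural observation, to be exploited throughout, is that for a circulant $S$ (Remark \ref{entries}) and an absolutely circulant $C$ (Lemma \ref{circ2}) both $s_{ij}$ and $|c_{ij}|$ depend only on the residue $k=(j-i)\bmod n$; explicitly $s_{ij}=s_{k}$ and $|c_{ij}|=|c_{k}|$.

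First I would establish nonnegativity. From the observation above together with the hypothesis $|c_{\iota}|\le s_{\iota}$, one gets $|c_{ij}|=|c_{k}|\le s_{k}=s_{ij}$ for every pair $(i,j)$, where $k=(j-i)\bmod n$; equivalently, $S$ and $|C|$ are permutatively equivalent circulants and Remark \ref{cond} upgrades the first-row inequalities to all entries. Since $0\le\gamma\le1$ forces $\gamma|c_{ij}|\le|c_{ij}|\le s_{ij}$, each entry $\frac{s_{ij}\pm\gamma c_{ij}}{2}$ is bounded below by $\frac{s_{ij}-|c_{ij}|}{2}\ge0$, so $M\ge0$. This is exactly the majorization condition (\ref{mayorize}), hence nonnegativity can also be read off directly from Theorem \ref{main2}.

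For permutativity I would show that every row of $M$ carries the same multiset of entries as its first row, which is precisely the requirement in Definition \ref{ept}. The top and bottom rows of a single block $M_{ij}$ contain the same pair of values $\left\{\frac{s_{ij}+\gamma c_{ij}}{2},\frac{s_{ij}-\gamma c_{ij}}{2}\right\}$; hence, reading the $(2i-1)$-st row of $M$ across $j=1,\ldots,n$, its multiset of entries is $\bigcup_{j=1}^{n}\left\{\frac{s_{ij}+\gamma c_{ij}}{2},\frac{s_{ij}-\gamma c_{ij}}{2}\right\}$, and the $2i$-th row yields the identical multiset. The decisive cancellation is that for any real $s$ and $c$ one has the multiset identity $\left\{\frac{s+\gamma c}{2},\frac{s-\gamma c}{2}\right\}=\left\{\frac{s+\gamma|c|}{2},\frac{s-\gamma|c|}{2}\right\}$, so the signs carried by $C$ are invisible at the level of row multisets.

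Finally I would close the argument using the residue structure. Because $(s_{ij},|c_{ij}|)=(s_{k},|c_{k}|)$ with $k=(j-i)\bmod n$, as $j$ runs through $1,\ldots,n$ for a fixed $i$ the index $k$ runs through $0,1,\ldots,n-1$ exactly once; therefore the multiset $\{(s_{ij},|c_{ij}|):1\le j\le n\}$ equals $\{(s_{k},|c_{k}|):0\le k\le n-1\}$, independently of $i$. Combined with the block and sign observations of the previous paragraph, every row of $M$ has the common multiset $\left\{\frac{s_{k}+\gamma|c_{k}|}{2},\frac{s_{k}-\gamma|c_{k}|}{2}:0\le k\le n-1\right\}$, equal to that of the first row, so each row is a permutation of the first and $M$ is permutative. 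I expect the only genuinely delicate point to be precisely this sign bookkeeping: the reason an \emph{absolutely} circulant $C$ (rather than a circulant or skew circulant one) still yields a permutative $M$ is that $|C|$ is circulant and the per-block sign of $c_{ij}$ washes out in the multiset identity above, so no spectral information about $C$ is needed for this proposition.
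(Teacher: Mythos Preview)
Your proof is correct and follows essentially the same route the paper indicates: the paper's own proof is the single sentence ``This proof is constructive and it is an analogous proof to the one presented at Theorem \ref{eqcsk},'' and your argument is precisely the explicit version of that construction---checking nonnegativity via the entrywise inequality $|c_{ij}|\le s_{ij}$ inherited from the first-row hypothesis, and checking permutativity via the circulant residue structure of $S$ and $|C|$. Your observation that the per-block sign of $c_{ij}$ disappears in the unordered pair $\{\tfrac{s+\gamma c}{2},\tfrac{s-\gamma c}{2}\}$ is exactly the reason the result extends from skew circulant $C$ to absolutely circulant $C$, and is the content the paper leaves implicit.
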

\begin{proof}
This proof is constructive and it is an analogous proof to the one presented at Theorem \ref{eqcsk}.
\end{proof}




 From the above definition and properties we now formulate the following question:

\begin{problem}
Could be obtained a spectral characterization for an absolutely circulant matrix in terms of matrices related to Discrete transform Fourier?
\end{problem}

\textbf{Acknowledgments}.
Enide Andrade was supported in part by the Portuguese Foundation for Science and Technology (FCT-Funda\c{c}\~{a}o para a Ci\^{e}ncia e a Tecnologia), through CIDMA - Center for Research and Development in Mathematics and Applications, within project UID/MAT/04106/2013. M. Robbiano was partially supported by project VRIDT UCN 170403003.

\end{document}